\newtheorem{thm}{Theorem}
\newtheorem*{sch}{Schoenberg Embedding Theorem}
\newtheorem{lem}{Lemma}
\newtheorem{obs}{Observation}
\newtheorem{prop}{Proposition}
\newtheorem{claim}{Claim}
\newcommand{\mc}{\mathcal}
\newcommand{\R}{\mathbb{R}}
\newcommand{\N}{\mathbb{N}}
\DeclareMathOperator{\vol}{Vol}
\DeclareMathOperator{\SPAN}{span}
\newcommand{\one}{\mathbbm{1}}
\newcommand{\paren}[1]{\left(#1\right)}
\newcommand{\set}[1]{\left\{ #1 \right\}}
\newcommand{\inner}[3][]{\ensuremath{\left\langle #2, #3 \right\rangle_{#1}}}
\newcommand{\abs}[1]{\left| #1 \right|}
\newcommand{\half}{\nicefrac{1}{2}}
\newcommand{\norm}[2][]{\left\| #2 \right\|_{#1}}
\newcommand{\size}[1]{\left| #1 \right|}
\newcommand{\hidden}[1]{}
\begin{document}
\title{On the structure of Isometrically Embeddable Metric Spaces}
\author{Kathleen Nowak}
\address{Pacific Northwest National Laboratory, Richland, WA 99352}
\email{kathleen.nowak@pnnl.gov}
\author{Carlos Ortiz Marrero}
\address{Pacific Northwest National Laboratory, Richland, WA 99352}
\email{carlos.ortizmarrero@pnnl.gov}
\author{Stephen J. Young}
\address{Pacific Northwest National Laboratory, Richland, WA 99352}
\email{stephen.young@pnnl.gov}
\thanks{\textit{PNNL Information Release:} PNNL-SA-126943}

\begin{abstract}
Since its popularization in the 1970s the Fiedler vector of a graph has
become a standard tool for clustering of the vertices of the graph.
Recently, Mendel and Noar, Dumitriu and Radcliffe, and Radcliffe and
Williamson have introduced geometric generalizations of the Fiedler
vector.   Motivated by questions stemming from their work we provide
structural characterizations for when a finite metric space can be
isometrically embedded in a Hilbert space.
\end{abstract}

\maketitle

\section{Introduction}
Since the work of Fiedler~\cite{Fiedler:AlgConnect,Fiedler:AlgConnectGraphs,Fiedler:LaplacianConnect}, the spectrum of the combinatorial
Laplacian $L = T-A$, where $A$ is the adjacency matrix of a connected,
simple graph $G = (V,E)$ and $T$ is the diagonal matrix of degrees, has received
significant attention as a means of partitioning the vertices of the
graph, see for instance \cite{Hagen:SpectralCut,Pothen:FiedlerPartition}. Specifically, the smallest non-zero eigenvalue and its
associated eigenvector (typically referred to as  the algebraic
connectivity, $a(G)$, and the Fiedler vector) have been used to create
bipartitions with guaranteed quality metrics~\cite{Alon:LaplacianDiameter,Cheeger:Inequality,Mohar:Isoperimetric,Spielman:SpectralWorksConf,Spielman:SpectralWorks,Urschel:MaximalSpectralError}.  More recently,
the smallest non-zero eigenvalue $\lambda_2(G)$ of the normalized Laplacian,
$\mathcal{L} = T^{-\half}LT^{-\half} = I - T^{-\half}AT^{-\half}$, and
its associated eigenvector (which is sometimes also referred to as the
Fiedler vector) have also been used to perform Fiedler-like
clustering~\cite{Ng:SpectralClustering,Shi:NormalizedCuts}.  It is worth mentioning that both $a(G)$
and $\lambda_2(G)$ have numerous connections to structural properties
of graphs beyond graph partitioning, see for instance
\cite{Biggs:AGT,Brouwer:SpectraGraphs,Chung:spectral,Godsil:AGT,Mohar:Laplacian}.

Over the past decade or so, there have been several works
generalizing the ideas and quality bounds of the Fiedler partitioning schemes to
multi-way partitions~\cite{Alpert:MultiWaySpectral,Kwok:HighOrderCheeger,Lee:HighOrderCheeger,Lee:HighOrderCheegerConf,Ng:SpectralClustering}.  Generally, the idea behind these
approaches has been to use the eigenvectors associated with the $k$
smallest non-zero eigenvalues to define a mapping from the vertices
into $\R^k$.  At this point an approximation of a geometric clustering algorithm, such as
$k$-means, is used to form the multi-way partitions.  Recently an
alternative generalization of the Fiedler vector has been developed by Mendel
and Naor for the combinatorial Laplacian~\cite{Mendel:HadamardExpanderConf,Mendel:NonlinearSpectralCalc,Mendel:HadamardExpander} and subsequently by
Radcliffe and Williamson \cite{Radcliffe:GeometricEigs}, and Dumitriu and Radcliffe~\cite{Radcliffe:GeometricExpansion}
for the normalized Laplacian.  These generalizations are rooted
in the observation that, via the Courant-Fischer theorem, there is some
functional form $\mathcal{R}$ such that $a(G)$ and
$\lambda_2(G)$ can be expressed as $\min_{f \colon V \rightarrow \R} \mathcal{R}(f)$.  Furthermore, this functional form can
be manipulated so that it depends only on $\paren{f(u) - f(v)}^2$,
which is the square of the standard distance metric on $\R$.  Thus,
one can naturally defined the \emph{geometric Fiedler vector} by
replacing $\R$ with an arbitrary metric space $(X,d)$ and considering
$\lambda(G,X) = \min_{f \colon V \rightarrow X} \mathcal{R}(f)$.  

This definition leads naturally to the question of when can the
geometric Fiedler vector be thought of as an eigenvector of some
linear operator.  Specifically, which finite metric spaces $(X,d)$ are
consistent with a definition of orthogonality, i.e. can be
isometrically embedded in a Hilbert space. Although this question can
be answered from an analytical point of view by the Schoenberg
Embedding Theorem~\cite{Schoenberg:Embedding,Schoenberg:MetricPosDef},
we instead approach the question from a more structural viewpoint. Specifically, for every metric $(X,d)$ associate the 
\emph{critical graph} $G = (X,E)$ with weights given by $d$ and edge
set given by $\set{ \set{u,v} \in \binom{X}{2} \colon \forall z \neq
  u,v \  d(u,v) < d(u,z) + d(z,v) }$.  In this work we will provide a
complete characterization of the unweighted critical graphs
corresponding to metrics which are isometrically embeddable in a
Hilbert space.  Additionally, we provide structural characteristics
of weighted critical graphs that are isometrically embeddable in
a Hilbert space.

\section{Schoenberg's Spectral Characterization} \label{shoenberg}
Although we are taking a structural point of view  by identifying
critically graphs corresponding to isometrically embeddable metric
spaces, the analytic embedding theorem of Schoenberg will play a key
role in our analysis~\cite{Schoenberg:Embedding,Schoenberg:MetricPosDef}.  Before stating
Schoenberg's result we need to define what it means for a bi-variate
function on a metric space to be conditionally negative
  definite.  To that end, let 
$(X,d)$ be a metric space. A function $\psi: X \times X \rightarrow
\R$ is called \emph{conditionally negative definite} if for any $x_1, x_2,
\dots x_n \in X$ and $\alpha_1, \alpha_2, \dots, \alpha_n \in \R$ such
that $\sum_{j = 1}^n \alpha_j = 0$, we have 
\[
\sum_{i,j = 1}^n\alpha_i\alpha_j \psi(x_i,x_j) \leq 0. 
\] 
With terminology in hand we can now state the following theorem:
\begin{sch} \label{embed_thm}
  Let $(X,d)$ be a metric space. There exists a Hilbert space $\mc{H}$ and an isometry $\phi: X \rightarrow \mc{H}$ if and only if $d^2$ is conditionally negative definite. 
\end{sch}

As we are dealing with finite metric spaces, it will be
helpful to rephrase Schoenberg's result in terms of
matrices.  Specifically, given a finite metric space $(X,d)$ define
$D$ to be the squared distance matrix for the metric, that is, $D_{xy}
= d(x,y)^2$.  From Schoenberg's result we have that $(X,d)$ is
isometrically embeddable into a Hilbert space if and only if 
\[
\max_{\substack{\alpha \in \R^n \\ \alpha \perp \one}} \alpha^T D \alpha \leq 0,
\]
where $\one$ is the all ones vector.  
Further, note that, 
\[
\max_{\substack{\alpha \in \R^n \\ \alpha \perp \one}} \alpha^T D \alpha  = \max_{v \in \R^n}v^T\left(I-\frac{1}{n}J\right)^TD\left(I-\frac{1}{n}J\right)v,
\]
where $J = \one^T\one$ is the matrix of all ones. 
Hence, $(X,d)$ is isometrically embeddable if and only if $\left(I-\frac{1}{n}J\right)^TD\left(I-\frac{1}{n}J\right)$ is negative semi-definite. 

Although not necessary for our study of critical graphs, it is
interesting to note that the standard proofs of
Schoenberg's result for finite metric spaces, see for instance \cite{Paulsen:RPKHilbert}, are
constructive.  Specifically, given an arbitrary fixed point $x_0 \in
X$, the Hilbert space is given by $\mathcal{H}
= \set{ \sum_{x \in X} \alpha_x k_x \colon \alpha_x \in \R}$ with $\inner{k_x}{k_y}
= d(x,x_0)^2 + d(y,x_0)^2 - d(x,y)^2$ and the isometry is given by
$\phi(x) = \frac{1}{\sqrt{2}}k_x$ for all $x$.    In particular, the
Hilbert space is isomorphic to the Hilbert space on $\R^{| X |}$
with the inner product given by $\alpha^T K \beta$ for $\alpha,\beta
\in \R^{|X|}$ and $K_{x,y} =  d(x,x_0)^2 + d(y,x_0)^2 - d(x,y)^2$.

Since the kernel function $K$ completely determines the Hilbert space
it is worth addressing how the choice of $x_0$ affects $K$.  To that
end consider the metric space formed by shortest path distances on
$P_3$, the path with three vertices.  When considering the three $K$
matrices generated by the vertices of $P_3$ we get 
\[
\left[ \begin{matrix}
0 & 0 & 0  \\
0 & 2 & 4 \\
0 & 4 & 8
\end{matrix}\right],
\quad 
\left[ \begin{matrix}
2 & 0 & -2  \\
0 & 0 & 0 \\
-2 & 0 & 2 
\end{matrix}\right],
\textrm{and\quad}
\left[ \begin{matrix}
8 & 4 & 0  \\
4 & 2 & 0 \\
0 & 0 & 0
\end{matrix}\right].
\]
The left and right kernels are clearly similar, reflecting the unique
non-trivial automorphism of $P_3$, but one can easily see that the
central kernel is not similar to the left or the right kernel by
noting that the trace is 4 as compared with 10.  To our knowledge it
is unknown if given generating points $x,x' \in X$ resulting in Hilbert
spaces $\mathcal{H}$, $\mathcal{H'}$ and isometries $\phi, \phi'$
there is an isometry $\psi$ between
$\mathcal{H}$ and $\mathcal{H'}$ such that $\phi' = \psi \circ \phi$.

\section{Prior Work}
Besides the work of Schoenberg there is an extensive literature
devoted to understanding the nature of embedding metric spaces in
other spaces, typically Hilbert spaces.  The work that is perhaps
closest in spirit to this work is recent work towards determining the
structure of (strictly) conditionally negative definite
matrices~\cite{Hjorth:StrictNegMetric,Joziak:NegDefKernels,Winkler:GraphsNegType}.  In
particular, the work
of Joziak and Winkler~\cite{Joziak:NegDefKernels,Winkler:GraphsNegType}
is very similar to this work in spirit, in that they concern
themselves with the graphical structures that result in conditionally
negative definite matrices.  Specifically, they consider when the
matrix for the shortest path metric in a graph is conditionally
negative definite.  In contrast with these works, we will be focussed on when
the entrywise square of the distance matrix is conditionally negative
definite.  Somewhat surprisingly, this results in significantly
different characterizations. Specifically, Joziak shows that a large
class of graphs including trees and odd cycles are conditionally
negative definite and we show that almost all of these metrics are not
isometrically embeddable in a Hilbert space with the only exceptions
being the path and the triangle. 

If, instead of requiring an isometric embedding into a Hilbert space, we allow for the
distances between elements to contract we end up considering what are
termed \emph{low-distortion embeddings}.  A survey of this broad
ranging field is beyond the scope of this work but an introduction
by Indyk and Matou\c{s}ek can be found in~\cite{Handbook:DiscGeom}.
At this point we would be remiss if we did nto also mention the connection between the
geometric Fiedler vector for the normalized Laplacian over the metric
generated by $K_2$.  Specifically, in this case the geometric Fiedler
vector gives the solution to the \textsc{Sparsest-Cut} problem with
pairwise unit demands.  This problem is known to be
\textsc{NP-complete} and thus can only be approximately solved
efficiently.  Several of these approximation schemes involve
calculating an auxiliary metric via linear programming and then
embedding this metric in Euclidean space with small distortion.  In
fact, the guarantees on the distortion often directly lead to the approximation
guarantees for the algorithm.  See for instance \cite{Arora:ExpanderFlows,Arora:EuclidDistort,Chawla:NegTypeEmbed,Karuthgamer:MeasuredDescent,Lee:RelaxationCutCone}.

Finally, we wish to mention the work of Graham and Winkler who
consider the question of whether a graph $G$ can be isometrically
embedded in a graph $G^*$ which is the Cartesian product of simpler
graphs.  Although there is no universal graph $H$ such that every
graph can be isometrically embedded in $H^m$ for some $m$, Graham and
Winkler provide, given a graph $G$, a canonical means of constructing
factors and a product graph $G^*$ such that there exists an isometric
embedding of $G$ in to $G^*$~\cite{Graham:IsometricEmbedConf,Graham:IsometricPNAS,Graham:IsometricEmbed,Graham:IsometricEmbedCorr}.

\section{Structural Characterizations of Isometric Embeddings}
There is a natural correspondence between finite metric spaces and
connected weighted graphs. For a connected weighted graph $G = (V, E,
w)$, the shortest path distance is a metric on $V$. Conversely, given
a metric space $(X,d)$, we say that a weighted graph $G = (V,E,w)$
\emph{represents} or \emph{generates} the metric if there exists a bijection $\phi: X \rightarrow V$
such that for all $x,x' \in X$, $d_G(\phi(x),\phi(x')) =
d_X(x,x')$. For example, $(X, \binom{X}{2}, d)$ generates the metric
space $(X,d)$. However, this graph representation is not unique. One
can often remove edges from this graph without altering the metric. We
will show, however, that there is a unique minimal graph representing the
metric and that this graph is the critical graph for the metric.
\begin{prop}
For any finite metric space the associated critical graph is the
unique graph with the minimum number of edges that generates the metric.
\end{prop}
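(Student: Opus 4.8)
The plan is to exploit the fact that an edge $\{u,v\}$ lies in the critical graph precisely when there is no \emph{betweenness point} for the pair, i.e.\ no $z \neq u,v$ with $d(u,v) = d(u,z) + d(z,v)$; this is exactly the negation of the defining strict inequality, since the triangle inequality always supplies $d(u,v) \leq d(u,z) + d(z,v)$. Before comparing edge counts I would first normalize the weights of an arbitrary generating graph $G = (V,E,w)$, which up to the bijection $\phi$ I identify with a weighted graph on $X$. Any edge must satisfy $w(u,v) \geq d(u,v)$, for otherwise the single edge already forces $d_G(u,v) < d(u,v)$. Moreover, if some edge has $w(u,v) > d(u,v)$ then there is a strictly shorter $u$--$v$ path witnessing $d_G(u,v) = d(u,v)$; splicing this witness in place of the edge in any shortest path shows the edge lies on no shortest path, so deleting it preserves the metric while reducing the edge count. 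Hence a minimum-edge generating graph may be assumed to carry weights $w \equiv d$, and then telescoping the triangle inequality shows every path has weight at least the distance between its endpoints.

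With this normalization, the first half is to show the critical graph $C$ itself generates the metric. The lower bound $d_C \geq d$ is immediate from the telescoping remark, so the content is producing, for each pair, a $C$-path of total weight exactly $d(u,v)$. I would argue this by strong induction on the (finitely many) values taken by $d$. If $\{u,v\}$ is a critical edge the single edge suffices; otherwise fix a betweenness point $z$, and note that $d(u,z)$ and $d(z,v)$ are both positive and hence strictly smaller than $d(u,v)$. The induction hypothesis supplies realizing $C$-paths $u \to z$ and $z \to v$, whose concatenation has weight $d(u,z) + d(z,v) = d(u,v)$. This yields $d_C = d$ and in particular shows $C$ is connected.

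The second half is to show that every critical edge is \emph{forced}: it belongs to every generating graph with weights $d$. Suppose $\{u,v\}$ is critical yet absent from such a $G$. A shortest $u$--$v$ path then passes through an intermediate vertex $z = x_1 \neq u,v$, and telescoping the tail gives $d(u,v) \geq d(u,z) + d(z,v)$; combined with the triangle inequality this is an equality, exhibiting a betweenness point and contradicting criticality. Consequently $C$ is contained in every generating graph, so every generating graph has at least $\size{E(C)}$ edges. Since $C$ is itself generating and attains this bound, any minimum-edge generating graph has exactly the critical edges and therefore equals $C$, giving both minimality and uniqueness.

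I expect the main obstacle to be the weight-normalization step rather than the two combinatorial arguments: one must verify cleanly that deleting an over-weighted edge and rerouting through its shorter witness never lengthens any shortest path, so that the reduction to $w \equiv d$ is lossless. Once weights equal distances, the betweenness induction and the forcedness argument are both short consequences of the triangle inequality.
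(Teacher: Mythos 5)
Your proof is correct and takes essentially the same route as the paper: the same two-part decomposition (the critical graph generates the metric, via an argument on distances that your strong induction on distance values matches the paper's minimal-counterexample choice of a violated pair with smallest $d(x,y)$; and every critical edge is forced in any generating graph because no non-trivial path can realize $d(u,v)$ when $d(u,v) < d(u,z)+d(z,v)$ for all $z$). Your explicit weight normalization, showing $w(u,v) \geq d(u,v)$ on every edge and that over-weighted edges lie on no shortest path and may be deleted, is a welcome tightening of a step the paper leaves implicit, but it is a refinement of the same argument rather than a different approach.
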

\begin{proof}
Let $(X,d)$ be a finite metric space and let $G = (X,E,w)$ be the
critical graph that is associated to the metric.   First we will
show that $G$ generates the metric $(X,d)$. To that end, let
$d^G(\cdot,\cdot)$ be the metric on $X$ induced by $G$. Since $d^G$ is
the shortest path metric on $G$, we have that 
for any $u \neq v$, there exists a path $u = x_0, x_1,
\ldots, x_{t+1} = v$ in $G$ such that 
\[ d^G(u,v) = \sum_{i=0}^t w(x_i,x_{i+1}) = \sum_{i=0}^t
  d(x_i,x_{i+1}) \geq d(u,v).\]
Thus $d^G$ is an upper bound on $d$.  
Suppose now that $d \neq d^G$, then the set $\set{ \set{u,v} \colon
  d(u,v) < d^G(u,v)}$ is non-empty and hence has an element $(x,y)$
which minimizes $d(x,y)$.  Now since $d(x,y) < d^G(x,y)$ we know that
$x\not\sim y$ in $G$, and thus there is some other element $z \in X$ such that
$d(x,y) = d(x,z) + d(z,y)$.  But by the construction of $\set{x,y}$,
$d(x,z) = d^G(x,z)$ and $d(z,y) = d^G(z,y)$, and hence $d(x,y) <
d^G(x,y) \leq d^G(x,z) + d^G(z,y) = d(x,z) + d(z,y) = d(x,y)$, a
contradiction.  

To complete the proof it suffices to show that every edge in the
critical graph $G$ must
be present in every graph that generates the metric space $(X,d)$.
However, this is obvious as if $\set{x,y} \in E$ then $d(x,y) < d(x,z)
+ d(z,y)$ for every $z$ other than $x$ or $y$, and thus the metric can
not be recovered for $\set{x,y}$ via any non-trivial path.
\end{proof}

From the discussion above, it suffices to view a finite metric space as a weighted graph adorned with the shortest path distance. Let $H$ be a weighted graph metric with squared distance matrix $D$. Then, as remarked above, $H$ can be isometrically embedded into a Hilbert space if and only if 
\[
\min_{\substack{\alpha \in \R^n \\ \alpha \perp \mathbbm{1}}} \alpha^T D \alpha \leq 0.
\]
Next we characterize which weighted graphs satisfy this criterion. 
\subsection{Unweighted Graphs}
 We first consider the case where the weights in the critical graph are
all the same, that is, where the critical graph may be viewed as an
unweighted graph.

\begin{thm}
Let $H = (V,E,w)$ be a weighted graph with weight function $w = \mathbbm{1}$. Then $H$ can be isometrically embedded in a Hilbert space if and only if $H$ is a path or a complete graph. 
\end{thm}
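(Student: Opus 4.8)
The plan is to prove both implications, with essentially all of the work living in the ``only if'' direction. For the ``if'' direction, a path $P_n$ embeds isometrically via the map sending the $i$-th vertex to $i \in \R$, while the complete graph $K_n$ embeds via $i \mapsto \frac{1}{\sqrt{2}} e_i \in \R^n$; equivalently, the squared-distance matrix of $K_n$ is $D = J - I$, so for $\alpha \perp \one$ one has $\alpha^T D \alpha = \paren{\one^T\alpha}^2 - \norm{\alpha}^2 = -\norm{\alpha}^2 \le 0$, and the criterion of Section~\ref{shoenberg} applies.

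For the converse I would first record the central reduction: if $(X,d)$ embeds isometrically then so does every subspace $(Y,d|_Y)$, since one may restrict the embedding. Equivalently, if some principal submatrix of $D$ admits $\alpha \perp \one$ with $\alpha^T D \alpha > 0$, then $H$ does not embed. This lets me exclude $H$ by exhibiting four vertices whose pairwise shortest-path distances in $H$ form a non-embeddable $4$-point metric. A short computation shows that each of the four connected $4$-vertex configurations other than $P_4$ and $K_4$ --- the claw $K_{1,3}$, the $4$-cycle $C_4$, the paw, and the diamond $K_4 - e$ --- fails the criterion: for instance $\alpha = (-3,1,1,1)$ certifies $\alpha^T D \alpha > 0$ for the claw, and $\alpha = (1,-1,1,-1)$ does so for $C_4$.

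The structural heart of the argument is a local-to-global clique lemma. First I would show that if $\deg(v) \ge 3$ then $N(v)$ is a clique: given neighbors $x,y,z$ of $v$ with $x \not\sim y$, the remaining distances satisfy $d(x,z), d(y,z) \in \set{1,2}$, and the induced metric on $\set{v,x,y,z}$ is then forced to be exactly one of the diamond, paw, or claw, each already shown non-embeddable. Next I would run a growth argument: if some vertex has $\deg(v) \ge 3$, then $Q := \set{v} \cup N(v)$ is a clique with $\size{Q} \ge 4$, and for any $w \notin Q$ adjacent to some $q \in Q$ the vertex $q$ has at least four neighbors, so $N(q)$ is a clique; hence $w$ is adjacent to all of $Q \setminus \set{q}$ and to $q$, making $Q \cup \set{w}$ a clique. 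Since $H$ is connected and finite, iterating forces $Q = V$, so $H = K_n$. Consequently, if $H$ is not complete then $\Delta(H) \le 2$, whence $H$ is a path or a cycle, and only cycles remain to be eliminated.

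The main obstacle I anticipate is ruling out $C_n$ for every $n \ge 4$ uniformly, since --- unlike the degree-$\ge 3$ case --- a large cycle need not contain any bad $4$-point subspace of small diameter (four consecutive vertices of $C_n$ with $n \ge 6$ form the embeddable metric $P_4$). I would resolve this using the circulant structure of $D$: since $D_{ij} = \min\paren{\size{i-j}, n - \size{i-j}}^2$ depends only on $i - j \bmod n$, its eigenvectors are the Fourier modes, $\one$ is one of them, and the remaining modes span $\one^{\perp}$; hence by the criterion of Section~\ref{shoenberg} embeddability is equivalent to $\lambda_k := \sum_{j=0}^{n-1} d(0,j)^2 \cos(2\pi jk/n) \le 0$ for all $k \ne 0$, and it suffices to produce one frequency with $\lambda_k > 0$. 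Concretely, when $4 \mid n$ the equally spaced vertices $0, n/4, n/2, 3n/4$ form a scaled copy of $C_4$ for which $\alpha = (1,-1,1,-1)$ gives $\alpha^T D \alpha = n^2/2 > 0$, and the remaining residues of $n$ are handled by the analogous near-equally-spaced quadruple (or, equivalently, by checking the sign of a single $\lambda_k$). Combining this cycle computation with the clique argument shows that an embeddable $H$ must be either complete or a path.
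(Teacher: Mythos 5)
Your proposal is correct, and its skeleton coincides with the paper's: restriction to $4$-point subspaces, the observation that a degree-$3$ vertex with an incomplete neighborhood forces exactly one of the claw, paw, or diamond metrics (the three configurations of Figure~\ref{configs}, with the same certificate $(-3,1,1,1)$ for the claw), the consequent reduction to paths, cycles, and complete graphs, and the elimination of long cycles by a four-vertex principal-submatrix certificate. You diverge in three places. First, in the ``if'' direction you exhibit explicit isometries ($i \mapsto i \in \R$ for $P_n$, $i \mapsto e_i/\sqrt{2}$ for $K_n$), which is shorter than the paper's verification that $\paren{I-\frac{1}{n}J}D\paren{I-\frac{1}{n}J} = -2vv^T$ and shows the path embeds in one dimension, while the paper's computation buys an explicit Gram factorization. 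Second, your clique-growth iteration (once $\set{v} \cup N(v)$ is a clique of size at least $4$, any outside vertex adjacent to it must join it) actually fills a step the paper leaves implicit, namely the passage from ``every vertex of degree $\geq 3$ has complete closed neighborhood'' to ``$H$ is a path, cycle, or complete graph.'' Third, for cycles the paper picks two nearly antipodal adjacent pairs --- $0, 2k-1, k-1, k$ with $\alpha = (1,-1,-1,1)$ giving $8(k-1)$ when $n = 2k$, and $0,1,k,k+1$ with $\alpha = (1,-1,1,-1)$ giving $2(2k-3)$ when $n = 2k+1$ --- a uniform even/odd split with no residue analysis mod $4$, whereas you inscribe a near-square. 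Your route works, but the gap choice matters: all four gaps must lie in $\set{\lfloor n/4 \rfloor, \lceil n/4 \rceil}$, since for $n = 11$ the quadruple $0,2,4,6$ (gaps $2,2,2,5$) gives $\alpha^T D \alpha = -10$ under the alternating sign pattern, while $0,3,6,9$ (gaps $3,3,3,2$) gives $+38$; with balanced gaps a one-line check of the four residue classes confirms positivity. Likewise your circulant reduction is valid (symmetry of $d(0,\cdot)^2$ makes the Fourier modes a real eigenbasis with $\one$ the zero mode), and the frequency that goes positive is $k = 2$ (e.g.\ $\lambda_2 = 2$ for $n = 4$ and $\lambda_2 = 4$ for $n = 6$), but your sketch leaves that sign analysis undone --- which is exactly the detail the paper's hand-picked quadruples sidestep.
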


\begin{proof}
We start by analyzing the subgraph induced by the closed neighborhood of a vertex. 
\begin{claim}
If $\min\limits_{\substack{\alpha \in \R^n \\ \alpha \perp \mathbbm{1}}} \alpha^T D \alpha \leq 0$, then for any vertex $v$ with $d_v > 2$, $H[N[v]]$ must be complete. 
\end{claim}

\begin{proof}
Suppose that $H$ contains a vertex $v$ of degree at least three such that $H[N[v]] \neq K_{d_v+1}$. Let $u$ and $w$ be non-adjacent neighbors of $v$ and let $z$ be a third neighbor. Note that the graph $H'$ induced by $\{v,u,z,w\}$ is a shortest path subgraph of $H$.  Further, $H'$ must be one of the graphs shown in Figure \ref{configs} with corresponding submatrix of $D$.
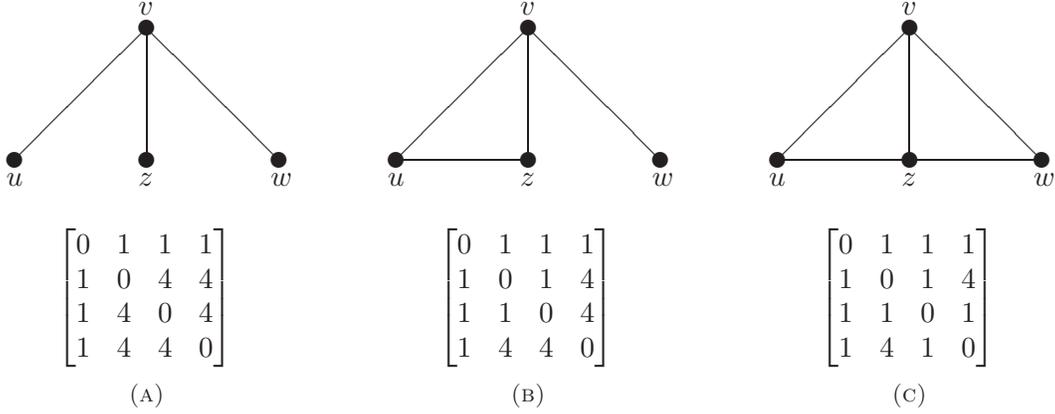
\begin{figure}
\centering
\hfill
\subfloat[]{
\begin{picture}(100,140)
\put(0,80){\circle*{6}}
\put(50,80){\circle*{6}}
\put(100,80){\circle*{6}}
\put(50,130){\circle*{6}}
\put(50,130){\line(-1,-1){50}}
\put(50,130){\line(0,-1){50}}
\put(50,130){\line(1,-1){50}}
\put(-3,70){$u$}
\put(47,70){$z$}
\put(97,70){$w$}
\put(47,135){$v$}
\put(16,25){$\left[ \begin{matrix} 0 &  1 & 1 & 1 \\ 1 & 0 & 4 & 4 \\ 
      1 & 4 & 0 & 4 \\ 1 & 4 & 4 & 0 \end{matrix}\right]$}
\end{picture}
}
\hfill
\subfloat[]{
\begin{picture}(100,140)
\put(0,80){\circle*{6}}
\put(50,80){\circle*{6}}
\put(100,80){\circle*{6}}
\put(50,130){\circle*{6}}
\put(50,130){\line(-1,-1){50}}
\put(50,130){\line(0,-1){50}}
\put(50,130){\line(1,-1){50}}
\put(0,80){\line(1,0){50}}
\put(-3,70){$u$}
\put(47,70){$z$}
\put(97,70){$w$}
\put(47,135){$v$}
\put(16,25){$\left[ \begin{matrix} 0 &  1 & 1 & 1 \\ 1 & 0 & 1 & 4 \\ 
      1 & 1 & 0 & 4 \\ 1 & 4 & 4 & 0 \end{matrix}\right]$}
\end{picture}
}
\hfill
\subfloat[]{
\begin{picture}(100,140)
\put(0,80){\circle*{6}}
\put(50,80){\circle*{6}}
\put(100,80){\circle*{6}}
\put(50,130){\circle*{6}}
\put(50,130){\line(-1,-1){50}}
\put(50,130){\line(0,-1){50}}
\put(50,130){\line(1,-1){50}}
\put(0,80){\line(1,0){100}}
\put(-3,70){$u$}
\put(47,70){$z$}
\put(97,70){$w$}
\put(47,135){$v$}
\put(16,25){$\left[ \begin{matrix} 0 &  1 & 1 & 1 \\ 1 & 0 & 1 & 4 \\ 
      1 & 1 & 0 & 1 \\ 1 & 4 & 1 & 0 \end{matrix}\right]$}
\end{picture}
}
\hfill
\hfill
\caption{Possible Neighborhood Configurations and Associated Principle
Submatrices}\label{configs}
\end{figure}
In each case, there exists a vector $\alpha$ such that $\alpha^TD\alpha >0$. For example, if the vertices are ordered starting with $v, u, w, z$, then assigning $\alpha$ to be $ (-3, 1, 1, 1, 0, \dots, 0)$, $(4, -1, -1, -2, 0, \dots 0)$, and $(1, -1, 1, -1, 0, \dots, 0)$ for configurations $(a)$, $(b)$, and $(c)$, respectively, yields $\alpha^TD\alpha >0$.  
\end{proof}
Now suppose that we can isometrically embed $H$ in a Hilbert space. Then by the previous  claim, $H$ must be a cycle, path, or complete graph. Next, we show that $H$ cannot be a cycle of length greater than three.
\begin{claim}
If $\min\limits_{\substack{\alpha \in \R^n \\ \alpha \perp \mathbbm{1}}} \alpha^T D \alpha \leq 0$, then $H \neq C_n$ for $n \geq 4$. 
\end{claim}

\begin{proof}
Suppose that $H = C_n$ for some $n \geq 4$. If $n = 2k$ is even, then by labeling the vertices sequentially (see Figure \ref{cycles}) and ordering them starting with $0, 2k-1, k-1, k$, $D$ has the following principal submatrix
$$
\left[ \begin{array}{cccc}
0 & 1 & (k-1)^2 & k^2 \\
1 & 0 & k^2 & (k-1)^2 \\
(k-1)^2 & k^2 & 0 & 1 \\
k^2 & (k-1)^2 & 1 & 0
\end{array}\right].
$$
Then setting $\alpha = (1, -1, -1, 1, 0, \dots, 0)$, $\alpha^TD\alpha = 8(k-1) > 0$. If $n = 2k+1$ is odd, then labeling the vertices sequentially (see Figure \ref{cycles}) and ordering them starting with $0, 1, k, k+1$, $D$ has the following principal submatrix
$$
\left[ \begin{array}{cccc}
0 & 1 & k^2 & k^2 \\
1 & 0 & (k-1)^2 & k^2 \\
k^2 & (k-1)^2 & 0 & 1 \\
k^2 & k^2 & 1 & 0
\end{array}\right].
$$
Then setting $\alpha = (1, -1, 1, -1, 0, \dots, 0)$, $\alpha^TD\alpha = 2(2k-3) >0$. 

\begin{figure}
\centering
\hfill
\hfill
\subfloat[Even Cycle]{
\begin{picture}(60,95) 
\put(5,35){\circle*{6}}
\put(5,85){\circle*{6}}
\put(55,35){\circle*{6}}
\put(55,85){\circle*{6}}
\put(5,85){\line(1,0){50}}
\multiput(5,35)(0,5){11}{\circle*{1}}
\put(5,35){\line(1,0){50}}
\multiput(55,35)(0,5){11}{\circle*{1}}
\put(2,90){$k$}
\put(42,90){$k-1$}
\put(2,22){$0$}
\put(50,22){$2k$}
\end{picture}
}
\hfill
\subfloat[Odd Cycle]{
\begin{picture}(60,95)
\put(5,35){\circle*{6}}
\put(5,85){\circle*{6}}
\put(55,35){\circle*{6}}
\put(55,85){\circle*{6}}
\put(30,23){\circle*{6}}
\put(5,85){\line(1,0){50}}
\multiput(5,35)(0,5){11}{\circle*{1}}
\multiput(55,35)(0,5){11}{\circle*{1}}
\put(5,35){\line(2,-1){25}}
\put(55,35){\line(-2,-1){25}}
\put(2,90){$k$}
\put(42,90){$k+1$}
\put(2,22){$0$}
\put(50,22){$2k$}
\put(15,12){$2k+1$}
\end{picture}
}
\hfill
\hfill
\hfill
\caption{Cycle Labellings}\label{cycles}      
\end{figure}
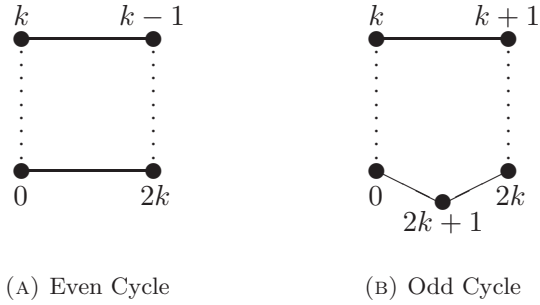
\end{proof}

Finally, we show that if $H$ is a path or complete graph then $D$ is
conditionally negative definite. First suppose that $H = K_n$ and
consider $\alpha \in \R^n$ such that $\alpha \perp \mathbbm{1}$. Then
$D = A = J-I$, $\alpha = (\alpha_1, \alpha_2, \dots, \alpha_{n-1},
-\sum_{i = 1}^{n-1} \alpha_i)$, and $\alpha^TD \alpha = -\langle
\alpha, \alpha \rangle \leq 0$. Now suppose that $H = P_n$. By our
previous observation, it suffices to show that $M := \left(I-\frac{1}{n}J\right)D\left(I-\frac{1}{n}J\right)$ is negative semidefinite. Now, $D_{ij} = (i-j)^2$ so
\begin{align*}
M_{ij} &= (i-j)^2 - \frac{1}{n}\sum_{k = 1}^n(k-j)^2-\frac{1}{n}\sum_{\ell = 1}{n}(\ell-i)^2+\frac{1}{n^2}\sum_{k = 1}^n\sum_{\ell = 1}^n(k-\ell)^2 \\
&= (i^2-2ij+j^2) -\frac{1}{n}\left( n(i^2+j^2)-2\sum_{k=1}^n k(i+j)+2\sum_{k = 1}^n k^2\right) + \frac{1}{n^2} \sum_{k =1}^n \sum_{\ell = 1}^n (k^2-2k \ell+\ell^2) \\
&= -2ij+\frac{2}{n}\binom{n+1}{2}(i+j)-\frac{2}{n^2}\binom{n+1}{2}\binom{n+1}{2} \\
&= -2\left(i-\frac{1}{n}\binom{n+1}{2}\right)\left(j-\frac{1}{n}\binom{n+1}{2} \right).
\end{align*}
Thus, $M = -2vv^T$ where $v_{i} = \left(i - \frac{1}{n} \binom{n+1}{2} \right)$ so $M$ is negative semidefinite as desired. 
\end{proof}

\subsection{Weighted Graphs}
The goal of this section is to describe the structure of isometrically
embeddable metric spaces when the associated critical graphs are
allowed to have edge weights other than zero.  Specifically, we will
provide a complete characterization of all 2-connected critical graphs
that have a weighting which admits an isometric embedding of the
associated metric space.  By simple enumeration it is easy to see that
all connected graphs on at most three vertices admit a weighting so
that the associated metric may be isometrically embedding in a Hilbert
space.  Thus, to begin our characterization we provide a complete
characterization of critical graphs on four vertices that admit an isometrically embeddable weighting.

\begin{lem}\label{L:4verts}
Let $(X,d)$ be a metric space with 4 elements that can be
isometrically embedded into a Hilbert space.  Let $G = (X,E,w)$ be the
critical graph associated with $(X,d)$, then $G$ is either a path,
$K_4$, or $K_4 - e$.
\end{lem}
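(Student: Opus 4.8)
The plan is to combine the Proposition with Schoenberg's theorem to convert the statement into a purely geometric question about betweenness in a Hilbert space, and then to eliminate the connected four-vertex graphs that are neither a path, $K_4$, nor $K_4 - e$. First I would note that, since the critical graph $G$ generates the metric, it is connected; as there are exactly six connected graphs on four vertices up to isomorphism, namely $P_4$, the star $K_{1,3}$, the paw (a triangle with one pendant edge), the four-cycle $C_4$, $K_4 - e$, and $K_4$, it suffices to exclude $K_{1,3}$, the paw, and $C_4$.

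The key tool is the following translation. By Schoenberg's theorem there is an isometry $\phi \colon X \to \mc{H}$, and because the norm on a Hilbert space comes from an inner product, the equality case of Cauchy--Schwarz shows that $d(x,y) = d(x,z) + d(z,y)$ holds if and only if $\phi(z)$ lies on the segment $[\phi(x),\phi(y)]$, and (as the points are distinct) $\phi(z)$ then lies strictly between $\phi(x)$ and $\phi(y)$. By the definition of the critical graph, every \emph{non-edge} $\set{x,y}$ of $G$ admits a witness $z$ with $d(x,y) = d(x,z)+d(z,y)$, so $\phi(z)$ is strictly between $\phi(x)$ and $\phi(y)$; dually, every \emph{edge} $\set{x,y}$ of $G$ obeys a strict triangle inequality through every other vertex, so no $\phi(z)$ lies on the open segment $(\phi(x),\phi(y))$. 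The whole argument then reduces to showing that the betweenness relations forced by the non-edges of each forbidden graph collide with the strictness demanded by some edge.

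I would carry this out case by case. For $K_{1,3}$ the centre $O$ satisfies $d(A,B) = d(A,O)+d(O,B)$ for every pair of leaves (a one-line check shows the third leaf can never witness the non-edge, as it would violate a strict triangle inequality), so $\phi(O)$ lies strictly between every pair of leaf images; but one point cannot lie strictly between all three pairs among three distinct collinear points, a contradiction. For the paw with triangle $\set{1,2,3}$ and pendant $4$ attached at $1$, the two non-edges $\set{2,4}$ and $\set{3,4}$ are witnessed only by vertex $1$, forcing $\phi(1)$ strictly between $\phi(4)$ and each of $\phi(2),\phi(3)$; this places $\phi(1),\phi(2),\phi(3)$ on a common ray out of $\phi(4)$, hence collinear, so one of them lies between the other two and the strict triangle inequality on the edge they span fails. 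For $C_4$ with cyclic order $1,2,3,4$, each diagonal non-edge $\set{1,3}$ and $\set{2,4}$ is witnessed by one of its two candidate midpoints, giving four combinations; in each the two betweenness relations force all four images to be collinear, and inspecting the resulting linear order exhibits two vertices strictly interior to some \emph{cycle} edge, contradicting that edge's strictness.

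Finally I would observe that the three surviving graphs are precisely those in the statement. The main obstacle is the $C_4$ case: one must verify that each of the four witness combinations genuinely pins down a single collinear ordering of the four points, and that in every such ordering some edge of the cycle (not merely a diagonal) has another vertex strictly in its interior. Justifying the betweenness dictionary from the Hilbert-space norm is routine, and the $K_{1,3}$ and paw cases follow the same collinearity template, so the bookkeeping of the cyclic orderings is where the real care is needed. As an alternative to the geometric route, one could instead fix a base point and test positive semidefiniteness of the kernel $K$ directly, but since the weights are arbitrary the symbolic $3 \times 3$ determinants are far less transparent than the betweenness argument above.
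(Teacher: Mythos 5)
Your proposal is correct, and it takes a genuinely different route from the paper's proof. The paper stays inside Schoenberg's matrix criterion: after normalizing so that $x_2 \sim x_3 \sim x_4$, $x_2 \not\sim x_4$, and $d_{24} = d_{23} + d_{34}$, it restricts $\alpha^T D \alpha$ to vectors $\alpha \perp \one$, eliminates $\alpha_1$ so that the quadratic form factors as $-\frac{2}{d_{14}}\inner{u}{w}\inner{v}{w}$, concludes that embeddability forces the proportionality $v = cu$ with $c \geq 0$, and then a case analysis on shortest paths converts this into polynomial identities in the $d_{ij}$ (Equation~\ref{E:roots}) that are refuted one by one. You instead use strict convexity of the Hilbert norm to turn the problem into synthetic geometry: $d(x,y) = d(x,z) + d(z,y)$ holds iff $\phi(z)$ lies strictly between $\phi(x)$ and $\phi(y)$, so every non-edge of the critical graph supplies a strictly interior witness while every edge forbids one, and the claw, the paw, and $C_4$ (the only connected $4$-vertex graphs besides $P_4$, $K_4 - e$, $K_4$) die by collinearity bookkeeping. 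I verified the bookkeeping: in the $C_4$ case each of the four witness combinations consists of two betweenness triples sharing two points, forcing all four images onto one line, and each of the four resulting orders places two vertices strictly inside a cycle edge; the paw and claw close similarly. One small repair is needed: your ``one-line check'' that a leaf cannot witness a leaf-pair non-edge in the claw is not a direct violation of any edge's strict inequality, since the claw's edges all meet the centre. Excluding it requires a secondary-witness argument: if $C$ witnesses $\set{A,B}$ and $B$ witnesses $\set{A,C}$, summing the two equalities gives $d(B,C) = 0$, so two leaf-witnessed pairs are impossible; hence the centre witnesses the other two non-edges, placing $\phi(A)$ and $\phi(B)$ on the same side of $\phi(O)$ along a line, which contradicts $\phi(C) \in \paren{\phi(A),\phi(B)}$. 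An analogous two-line exclusion is needed before asserting that vertex $1$ is the only witness in the paw. With those patches your argument is cleaner and in fact strictly more general than the paper's: it uses only strict convexity of the norm, so the same forbidden-configuration conclusion holds for isometric embeddings into any strictly convex normed space, whereas the paper's proof is tied to Schoenberg's Hilbert-space criterion. What the paper's computation buys in exchange is explicit certificates --- vectors $\alpha$ with $\alpha^T D \alpha > 0$ and distance identities --- in the same quantitative style as the discriminant computation reused later in the proof of Theorem~\ref{T:3conn}.
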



\begin{proof}
Denote the elements of $X$ by $x_1, x_2, x_3, x_4$ and suppose that
$G$ is not complete. As $G$ is necessarily connected, we may assume,
without loss of generality, that $x_2 \sim x_3$, $x_3 \sim x_4$, and $x_2 \not\sim x_4$.  Further, we may also assume that  $d(x_2,x_4) = d(x_2,x_3) + d(x_3,x_4)$.   For
simplicity of notation, denote $d(x_i,x_j)$ by $d_{ij}$.  Now by
Schoenberg's Embedding Theorem, for any choice of
$\alpha_1,\alpha_2,\alpha_3 \in \R$, the vector $\alpha =
(\alpha_1,\alpha_2,\alpha_3,-\alpha_1-\alpha_2-\alpha_3)^T$ must satisfy
$\alpha^T D \alpha \leq 0$ where $D_{ij} = d_{ij}^2$.  We now note
that
\begin{align*}
\alpha^T D \alpha &= 2 \left[- \sum_{i=1}^3 d_{i4}^2 \alpha_i^2 +
                    \sum_{1 \leq i < j \leq 3} \paren{d_{ij}^2 -
                    d_{i4}^2 - d_{j4}^2}\alpha_i\alpha_j  \right] \\
&= 2 \left[- \paren{\sum_{i=1}^3 d_{i4}\alpha_i}^2+
                    \sum_{1 \leq i < j \leq 3} \paren{d_{ij}^2 -
                    \paren{d_{i4} - d_{j4}}^2}\alpha_i\alpha_j
  \right] \\
&= 2 \left[- \paren{\sum_{i=1}^3 d_{i4}\alpha_i}^2+
                   \paren{d_{12}^2 -
                    \paren{d_{14} - d_{24}}^2}\alpha_1\alpha_2 +
                   \paren{d_{13}^2 -
                    \paren{d_{14} - d_{34}}^2}\alpha_1\alpha_3\right],
\end{align*}
where the last equality comes from the fact that $d_{24} = d_{23} + d_{34}$.
Thus, if $\alpha_1$ is chosen to be $\nicefrac{-d_{24}\alpha_2
  -d_{34}\alpha_3}{d_{14}}$, we have that 
\[ \alpha^TD\alpha = 2\alpha_1\paren{\paren{d_{12}^2 -
                    \paren{d_{14} - d_{24}}^2}\alpha_2 +
                   \paren{d_{13}^2 -
                    \paren{d_{14} - d_{34}}^2}\alpha_3 }\]
Letting 
\[ u = \left[ \begin{matrix} d_{24} \\ d_{34} \end{matrix}\right] \textrm{,}
\quad v = \left[\begin{matrix} d_{12}^2 - \paren{d_{14} -
        d_{24}}^2 \\ d_{13}^2 - \paren{d_{14} -
        d_{34}}^2 \end{matrix} \right] \textrm{,} \quad\textrm{and }w =
  \left[ \begin{matrix} \alpha_2  \\ \alpha_3 \end{matrix} \right] \]
we have that
\[ \alpha^T D \alpha = - \frac{2}{d_{14}} \inner{u}{w}\inner{v}{w}.\]
But as the choice of $w$ is arbitrary in $\R^2$, this implies that
there exists some non-negative constant $c$ such that $v = cu$ as
otherwise there is a some $w$ such that $\inner{u}{w}\inner{v}{w} <
0$.

We now consider the shortest path between $x_1$ and $x_4$ in the
critical graph $G$ of $(X,d)$. This path is either $(x_1, x_2, x_3, x_4)$,
$(x_1, x_3, x_4)$, or $(x_1,x_4)$.  In the first case $G$ is a path, so consider the case where the shortest path is
$x_1,x_3,x_4$.  In this case we have that $d_{14} = d_{13} + d_{34}$, and $d_{13} < d_{12}  + d_{23}$.
Substituting the value for $d_{14}$ into the equation for $v$, we have 
\[ v =\left[\begin{matrix} d_{12}^2 - \paren{d_{13}+d_{34} -
        d_{24}}^2 \\ d_{13}^2 - \paren{d_{13} + d_{34} -
        d_{34}}^2 \end{matrix} \right]  = \left[\begin{matrix} d_{12}^2 - \paren{d_{13} -
        d_{23}}^2 \\ 0\end{matrix} \right].\]
But, as $d_{13} < d_{12} + d_{23}$ and $d_{23} < d_{13} + d_{12}$, we
have that the first component of $v$ is non-zero.  Thus, as both
components of $u$ are strictly positive, there is no
non-negative constant $c$ such that $v = cu$, a contradiction.

At this point we may assume that the critical graph contains the edge
$\set{x_1, x_4}$.  But then we have that $d_{14} < d_{13} + d_{34}$ and
$d_{34} < d_{14}  + d_{13}$ and so the second component of $v$ is
strictly positive.  By the same argument as above, this implies that
the first component of $v$ is strictly positive, and in particular
that $d_{12} > \abs{d_{14} - d_{24}}$.  Thus the required constant $c$
is strictly positive.  Now the existence of such a constant is
equivalent to $v_2 u_1 - v_1 u_2 = 0.$  Combining this with the fact
that $d_{24} =
d_{23} + d_{34}$, we get that
\begin{equation}\label{E:roots}
0 = v_2 u_1 - v_1 u_2  = d_{23}d_{13}^2 - d_{23}d_{14}^2 +
  d_{34}d_{13}^2 - d_{34}d_{12}^2 + d_{34}d_{23}^2 +
  d_{23}d_{34}^2.
\end{equation}  

At this point we may consider the shortest path between $x_1$ and
$x_2$ in the critical graph of $(X,d)$. As above, there are two
non-path choices, namely, $x_1,x_2$ and $x_1,x_3,x_2$.  First assume that the
shortest path is $x_1,x_3, x_2$ and hence $d_{12} = d_{13} + d_{23}$.
Substituting into Equation \ref{E:roots}, we have that
\begin{align*}
0 &= d_{23}d_{13}^2 - d_{23}d_{14}^2 +
  d_{34}d_{13}^2 - d_{34}\paren{d_{13}+d_{23}}^2 + d_{34}d_{23}^2 +
  d_{23}d_{34}^2 \\
&=d_{23}d_{13}^2 - d_{23}d_{14}^2 
  - 2d_{34}d_{13}d_{23}+
  d_{23}d_{34}^2 \\
&=d_{23}\left[ d_{13}^2 - d_{14}^2 
  - 2d_{34}d_{13}+
  d_{34}^2 \right]\\
&=d_{23}\left[ \paren{d_{13}-d_{34}}^2 - d_{14}^2\right].\\
\end{align*}
But this implies that $\abs{d_{13} - d_{34}} = d_{14}$ and hence,
either $d_{13} = d_{14} + d_{34}$ and $x_1 \not\sim x_{3}$ or $d_{34}
= d_{13} + d_{14}$ and $x_3 \not\sim x_4$. Both cases contradict our assumptions so we may assume that $x_1 \sim x_2$ in the critical
graph.  

We now consider whether $x_1$ and $x_3$ are adjacent in the critical
graph.  If $x_1 \not\sim x_3$, then either $d_{13} = d_{12} + d_{23}$
or $d_{13} = d_{14} + d_{34}$.  Substituting into equation
\ref{E:roots} we get 
\[ 0 = d_{23}\left[ \paren{d_{12} + d_{23} + d_{34}}^2 - d_{14}^2\right]\]
and 
\[ 0 = d_{34}\left[ \paren{d_{34} + d_{14}}^2 + d_{23}^2 +
    2d_{14}d_{34} - d_{14}^2\right], \] respectively. 
Both of these equations are obviously false, yielding
$x_1 \sim x_3$.  Thus, we have  that the critical
graph of $(X,d)$ is either a path, $K_4$, or $K_4 -e$.
\end{proof}

It is relatively easy to construct infinite families of distinct metrics with integer distances
whose critical graphs are $P_4,$ $K_4$, or $K_4-e$ and are embeddable
into a Hilbert space.  For instance, for $K_4 -e$ we have:

\begin{obs}
There exists an infinite family of isometrically embeddable metric spaces $(X,d)$ on four elements
such that the critical graph for $(X,d)$ is $K_4 - e$ and $d$ is
integer valued. Further, each metric can be isometrically embedded in $\R^2$.
\end{obs}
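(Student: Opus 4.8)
The plan is to exhibit the metrics concretely as four-point subsets of the Euclidean plane, so that isometric embeddability into a Hilbert space (indeed into $\R^2$) is automatic, and then to control the critical graph by prescribing exactly one collinear triple. The key observation is that any finite subset of $\R^2$ with the Euclidean distance is trivially isometrically embeddable into the Hilbert space $\R^2$; equivalently, for points $p_1, \dots, p_n \in \R^2$ and $\alpha \perp \one$ one has $\sum_{i,j} \alpha_i \alpha_j \norm{p_i - p_j}^2 = -2 \norm[]{\sum_i \alpha_i p_i}^2 \le 0$, since the two ``diagonal'' contributions vanish by $\sum_i \alpha_i = 0$. Thus the squared-distance matrix is conditionally negative definite, and it suffices to place four points with integer pairwise distances whose critical graph is $K_4 - e$.

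First I would fix a single base configuration. Place $x_3 = (0,0)$, $x_2 = (-5,0)$, and $x_4 = (9,0)$ collinear on the $x$-axis, and $x_1 = (0,12)$ off the line. Then $d_{23} = 5$, $d_{34} = 9$, $d_{24} = 14$, $d_{13} = 12$, $d_{12} = \sqrt{25 + 144} = 13$, and $d_{14} = \sqrt{81 + 144} = 15$, all integers; this uses that $12$ is a common leg of the Pythagorean triples $(5,12,13)$ and $(9,12,15)$.

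Next I would verify that the critical graph is exactly $K_4 - e$ with missing edge $\set{x_2, x_4}$. Because $x_2, x_3, x_4$ are collinear with $x_3$ between them, $d_{24} = d_{23} + d_{34}$, so $\set{x_2, x_4}$ is \emph{not} a critical edge. Every other pair spans a genuine non-degenerate triangle with each third point, so each of the remaining triangle inequalities is strict and all five other edges are critical. This is the step I expect to require the most care: one must confirm that the single collinearity is the \emph{only} tight relation, so that the critical graph is neither a path (too few edges) nor $K_4$ (too many).

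Finally, to produce an infinite family I would scale. For each $t \in \N$ take the metric $t \cdot d$, realized by the scaled point set $\set{t p_1, t p_2, t p_3, t p_4} \subset \R^2$. Scaling preserves integrality, keeps the configuration in $\R^2$, and multiplies every distance and every triangle sum by $t$, hence preserves exactly which triangle inequalities are tight; so every $t \cdot d$ has critical graph $K_4 - e$ and embeds in $\R^2$. Since the multiset of pairwise distances of $t \cdot d$ is $t$ times that of $d$, distinct values of $t$ give pairwise non-isometric metrics, yielding the desired infinite family. One could alternatively vary the off-line height among integers that are legs of two distinct Pythagorean triples to obtain genuinely non-homothetic shapes, but the scaling argument already suffices.
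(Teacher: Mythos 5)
Your proposal is correct and follows essentially the same route as the paper: both realize the metric as four integer-distance points in $\R^2$ with exactly one collinear triple (so that precisely the edge between the two outer collinear points drops out of the critical graph), using Pythagorean triples with a common leg to force integrality, with embeddability coming for free from the planar realization. The only difference is how infinitude is achieved --- the paper varies an integer $z$ and its factorizations $z = p_i q_i$ with $p_i \equiv q_i \pmod{2}$ to produce non-homothetic configurations, whereas you scale a single fixed configuration by $t \in \N$, which is simpler and still yields pairwise non-isometric integer-valued metrics, as the statement requires.
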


\begin{proof} 
Fix $z \in \N$ and $(p_i,q_i)$ for $i = 1,2,3$ such that $p_iq_i =
z$ and $p_i \equiv q_i \pmod{2}$. Define  $x_i =
\left(\frac{p_i^2-q_i^2}{2},0\right)$ for $i = 1, 2, 3$, and $x_4 =
(0,z)$. Then, since $\paren{p_iq_i, \frac{p_i^2-q_i^2}{2}, \frac{p_i^2+q_i^2}{2}}$ are three integers that form a Pythagorean triple for $i = 1,2,3$, the submetric space of $\R^2$ on these four points has $K_4-e$ as a critical graph and integer weights as desired.
\end{proof}

We can use the complete characterization of 4-element embeddable metrics
to show that paths are  the unique class of 1-connected,
weighted. isometrically embeddable, critical graphs.

\begin{lem}\label{L:trees}
Let $(X,d)$ be a metric space that can be isometrically embedded into
a Hilbert space and let $G = (X,E,d)$ be the critical graph associated with $(X,d)$. $G$ is either a path or $2$-connected and further, if $\set{u,v}$ is
a vertex cut, then $u \sim v$.
\end{lem}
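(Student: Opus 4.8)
The plan is to use Lemma~\ref{L:4verts} as a source of forbidden local configurations. Since any subset of an isometrically embeddable space is again isometrically embeddable, the critical graph of every $4$-point submetric must be one of $P_4$, $K_4$, or $K_4-e$; in particular neither the claw $K_{1,3}$, nor the ``paw'' (a triangle with a pendant edge), nor the four-cycle $C_4$ can occur as the critical graph of a $4$-point submetric. Moreover, because $G$ generates the metric, the critical graph of the submetric on four points $\set{w,x,y,z}$ is completely determined by betweenness in $G$: a pair is a non-edge of that submetric exactly when one of the other two chosen points lies metrically between its endpoints. I would first prove the dichotomy (path or $2$-connected) and then the statement about $2$-cuts.

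For the dichotomy, suppose $G$ has a cut vertex $c$ and let the components of $G-c$ be $C_1,\dots,C_k$; since $G$ is connected, $c$ has a neighbor in each $C_i$. If $k\ge 3$, choosing one neighbor of $c$ from each of three distinct components yields four points whose only critical edges are the three edges at $c$ (every other pair is separated by $c$), so the submetric critical graph is the claw $K_{1,3}$, a contradiction; hence $k=2$. If $c$ had two neighbors $a,a'$ in one component and a neighbor $b$ in the other, then on $\set{a,a',c,b}$ the pairs $\set{a,b}$ and $\set{a',b}$ are non-edges through $c$, while $\set{a,a'}$ is a non-edge (giving $K_{1,3}$) or an edge (giving a paw) according to whether $c$ lies between $a$ and $a'$; either way we contradict Lemma~\ref{L:4verts}. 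Thus every cut vertex has degree exactly two. A short block-decomposition argument then finishes: a degree-two cut vertex cannot lie in a $2$-connected block on at least three vertices, so every block is a single edge, $G$ is a tree, and since every internal vertex of a tree is a cut vertex, all internal vertices have degree two and $G$ is a path.

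For the second part, assume $G$ is $2$-connected and that $\set{u,v}$ is a vertex cut with $u\not\sim v$; I aim for a contradiction. Fix a neighbor $z$ of $u$ on a shortest $u$--$v$ path, so that $u\sim z$ and $z$ lies between $u$ and $v$, and say $z$ belongs to component $A$ of $G-\set{u,v}$; let $B$ be any other component. For each $b\in B$ I analyze the submetric on $\set{u,z,v,b}$: here $\set{u,v}$ (via $z$) and $\set{z,b}$ (via $u$ or $v$) are non-edges while $\set{u,z}$ and $\set{z,v}$ are edges, and a direct check of the remaining two pairs shows the critical graph is the forbidden $C_4$ \emph{unless} $v$ lies between $u$ and $b$, or $u$ lies between $v$ and $b$. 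Hence every $b\in B$ satisfies one of these two betweenness relations.

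The crux is to convert this into a contradiction, and it is where the Hilbert structure must be used: in a Hilbert space equality in the triangle inequality forces colinearity, so the relations above mean exactly that $\phi(z)$ lies on the segment $[\phi(u),\phi(v)]$ and every $\phi(b)$ lies on the line through $\phi(u)$ and $\phi(v)$, on the far side of $u$ or of $v$. Since $G$ is $2$-connected, both $u$ and $v$ have neighbors in $B$; criticality of an edge forbids a neighbor of $u$ from lying beyond $v$ (and conversely), so $B$ splits into a nonempty set of vertices lying beyond $u$ and a nonempty set lying beyond $v$. But any edge of $G$ joining a ``beyond $u$'' vertex to a ``beyond $v$'' vertex would have $u$ strictly between its endpoints and hence could not be critical; therefore no edge of $G$ joins the two parts, contradicting connectedness of $B$. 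This forces $u\sim v$. The main obstacle is precisely this last step: the four-point test alone only rules out $C_4$ and otherwise returns an innocuous $P_4$, so the argument genuinely needs the colinearity coming from the Hilbert embedding, together with the $2$-connectivity of $G$, to close the gap.
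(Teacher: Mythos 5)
Your proposal is correct, and while the first half (forbidding the claw and the paw as $4$-point critical graphs via Lemma~\ref{L:4verts}, hence every cut vertex has degree $2$) is essentially the paper's argument --- you in fact supply the block-decomposition step that the paper asserts without proof --- your treatment of the $2$-cut claim takes a genuinely different route. The paper stays entirely inside the four-point framework: it takes shortest $u$--$v$ paths $P_1$, $P_2$ through two different components, gets a forbidden $C_4$ on $\set{u,v,x,y}$ unless $y$ sees $u$ but not $v$, and then walks along $P_2$ to find \emph{adjacent} vertices $z_u, z_v$ at which the shortest route to $v$ switches from ``via $u$'' to ``not via $u$''; the submetric on $\set{u,v,z_u,z_v}$ is then again a forbidden $C_4$ (here $\set{u,v}$ becomes a critical edge of the submetric because $P_2$ is not globally shortest). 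You instead classify every vertex $b$ of the other component by the dichotomy ``$u$ between $v,b$'' or ``$v$ between $u,b$'' (your $C_4$ test, which checks out), and then close the argument by strict convexity of Hilbert space: triangle-equality forces collinearity, so the component splits into nonempty ``beyond $u$'' and ``beyond $v$'' parts with no critical edge across, contradicting connectivity. Both arguments are valid; yours is shorter and more geometric, while the paper's buys something you explicitly claim is impossible: your closing assertion that ``the argument genuinely needs the colinearity coming from the Hilbert embedding'' is refuted by the paper's proof, which closes exactly that gap using only four-point tests with a smarter choice of quadruple --- so the paper's version establishes the formally stronger fact that the conclusion already follows for any metric all of whose $4$-point submetrics have critical graph $P_4$, $K_4$, or $K_4-e$. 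Incidentally, even within your own setup the final disconnection step is avoidable: once collinearity places an adjacent pair $b_1$ beyond $u$ and $b_2$ beyond $v$, the quadruple $\set{u,v,b_1,b_2}$ is itself a forbidden $C_4$, since $\set{u,v}$ and $\set{b_1,b_2}$ are critical edges there while $u$ lies between $b_1$ and $v$ (and $v$ between $u$ and $b_2$).
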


\begin{proof} 
Suppose that $G$ contains a cut vertex. If every cut vertex has degree 2, $G$ is a path. Thus, suppose that $G$ contains a cut vertex $v$ of degree at least 3. Let $x,y,$ and $z$ be neighbors of $v$ such $z$ belongs to a different component of $G\setminus v$ than both $x$ and $y$. Then $x,v,z$ and $y,v,z$ are shortest paths in $G$. Thus, the critical graph of the submetric space on $\{v,x,y,z\}$ is either the claw or the claw with a single additional edge, both of which are forbidden. Thus, $G$ is either a path or 2-connected. 

Now suppose that $G$ is strictly 2-connected and let $\{u,v\}$ be a
cut set such that $u \not\sim v$. Let $C_1, C_2,\dots, C_k$ denote the
components of $G \setminus \{u,v\}$ and let $H_i := G[\set{u,v} \cup
V(C_i)]$.  Note that since $G$ has no cut vertices each $H_i$ has a
shortest path from $u$ to $v$, say $P_i$. One of these shortest paths
must be the shortest path in $G$, say $P_1$ in $H_1$.  Since $u \not\sim v$, $P_1$ and $P_2$ contain vertices $x$ and $y$, respectively, different from $u$ and $v$.

Now consider the critical graph $G'$ of the submetric on
$\{u,v,x,y\}$. Since $P_1$ is a shortest path in $G$, $d(u,x)+d(x,v) =
d(u,v)$ and so the edge $\{u,v\}$ is not present in $G'$. Further,
since no shortest path from $u$ or $v$ to $x$ in $G$ contains $y$,
$u,x,v$ is a path in $G'$. Next, observe that a shortest path from $x$
to $y$ in $G$ must go through either $u$ or $v$. Thus, the edge
$\{x,y\}$ is not present in $G'$. Now since $G'$ is connected, $y$ is
adjacent to at least one of $u$ and $v$. Without loss of generality,
assume that the edge $\{y,u\}$ is present. If the edge $\{y,v\}$ is
also present, we are done since the critical graph of $G'$ is $C_4$, a contradiction. 

Thus, suppose that $\{y,v\}$ is not present. Then $d(y,v) = d(y,u)+d(u,v)$ so $P_2$ is not a shortest path in $G$. Further, there exists a vertex $y'$ adjacent to $y$ on $P_2$ between $y$ and $v$. Now starting from $y$ and moving along $P_2$, we eventually find adjacent vertices $z_u$ and $z_v$ such that a shortest path from $z_u$ to $v$ goes through $u$ and a shortest path from $z_v$ to $v$ does not. Now consider the critical graph $G''$ of the submetric on $\{u,v,z_u,z_v\}$. Since $P_2$ is not a shortest path in $G$, the edge $\{u,v\}$ must be present. Additionally, the edge $\{z_u,z_v\}$ is present as $z_u$ and $z_v$ are adjacent in $G$. Then since a shortest path from $z_v$ to $u$ in $G$ must go through either $z_u$ or $v$, we see that the edge $\{z_v,u\}$ is not present. In the same vain, we also have that the edge $\{z_u, v\}$ is not present. Finally, by the selection of $z_u$ and $z_v$, the edges $\{z_u,u\}$ and $\{z_v,v\}$ are present. Thus, we find that $G''$ is $C_4$, a contradiction. 
\end{proof}

Finally we complete the characterization of isometrically embeddable
2-connected critical graphs by providing a complete listing of 2-connected critical graphs that have an isometric embedding.

\begin{thm}\label{T:3conn}
Let $(X,d)$ be a metric space that can be isometrically embedded into
a Hilbert space.  Let $G = (X,E,w)$ be the critical graph associated
with $(X,d)$.  If $G$ is 2-connected, but not 3-connected, then $G$
contains a Hamiltonian path $v_1, \ldots, v_{\size{X}}$ and further,
there is some $2 \leq k \leq \size{X}-1$ such that $v_i \sim v_j$ if
and only if $\abs{i-j} = 1$ or $i < k < j$.
\end{thm}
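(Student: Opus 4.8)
The plan is to start from a smallest separator and bootstrap the four-point constraints of Lemma~\ref{L:4verts} into the global straddling pattern. Since $G$ is $2$-connected but not $3$-connected it has a $2$-cut, and Lemma~\ref{L:trees} guarantees that any $2$-cut $\{a,b\}$ satisfies $a\sim b$. Fix such a cut, let $C_1,\dots,C_m$ (with $m\ge 2$) be the components of $G-\{a,b\}$, and record two facts used throughout: first, a geodesic joining two vertices of a single component cannot leave that component, so the critical graph of the submetric on $C_i\cup\{a,b\}$ is exactly $G[C_i\cup\{a,b\}]$ and still contains the edge $ab$; second, $2$-connectivity forces each of $a$ and $b$ to have a neighbour in every $C_i$. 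The engine of the argument is that, by Lemma~\ref{L:4verts}, no four points may have submetric critical graph other than $P_4$, $K_4$, or $K_4-e$; equivalently the claw, the paw, and $C_4$ are forbidden.

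First I would show $m=2$. Choosing $p_i\in C_i$ with $p_i\sim a$, every cross distance $d(p_i,p_j)$ is realised through $a$ or through $b$; colouring the three edges of the triangle on $\{p_1,p_2,p_3\}$ by a hub achieving the minimum, two of them share both a colour and a vertex, and feeding that hub together with the three representatives into a four-point set produces a claw or a paw, which is forbidden. Hence $m\le 2$. Next I would isolate the pivot: using the $P_4$ versus $K_4-e$ dichotomy for quadruples $\{a,b,x,y\}$ with $x,y$ in opposite sides, I would argue that one component must consist of a single vertex $p$ adjacent to exactly $a$ and $b$ (so $G[\{a,b,p\}]$ is a triangle), leaving a single nontrivial component $Q$.

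It then remains to describe $H:=G-p=G[Q\cup\{a,b\}]$. The goal is a bipartition $Q\cup\{a,b\}=L\sqcup R$ with $a\in L$ and $b\in R$ such that $L$ and $R$ each induce a path with $a$ and $b$ at their inner ends, and every $L$--$R$ pair is an edge. I would obtain the bipartition and the complete join by applying the $K_4-e$/$P_4$ classification to carefully chosen quadruples, always recomputing the submetric critical graph rather than reading it off $G$, and obtain the two induced paths by the same exclusions applied within each side. Concatenating the $L$-path, the pivot $p$, and the $R$-path yields a Hamiltonian path $v_1,\dots,v_{\size{X}}$; taking $k$ with $v_k=p$, a final comparison against the allowed four-point graphs confirms that $v_i\sim v_j$ exactly when $\abs{i-j}=1$ or $i<k<j$.

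The main obstacle is that Lemma~\ref{L:4verts} constrains the critical graph of each submetric, not the induced subgraph of $G$: a pair that is a non-edge of $G$ can become an edge once the intermediate vertex is dropped from the quadruple, so every application must track which of $a$ or $b$ (or which intermediate vertex) actually lies on the relevant geodesic. This bookkeeping is what makes ruling out $m\ge 3$, forcing the single pivot, and especially pinning down the complete bipartite join of $H$ delicate; note in particular that $H$ is typically $3$-connected and therefore outside the scope of this theorem, so it must be handled directly rather than by induction on the number of vertices. A secondary difficulty is global consistency: the four-point comparisons only order vertices locally, and one must verify that the two side-orders glue through the pivot into a single Hamiltonian order for which the straddling rule holds for all pairs simultaneously, rather than merely on the quadruples examined.
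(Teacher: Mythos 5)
Your opening moves match the paper: every $2$-cut $\{a,b\}$ has $a\sim b$ by Lemma~\ref{L:trees}, and the pigeonhole argument on which of $a,b$ hubs the cross geodesics (producing a forbidden claw or claw-plus-edge) is exactly how the paper shows $G-\{a,b\}$ has two components. But the next step --- ``one component must consist of a single vertex $p$'' --- is a genuine gap: it is false for an \emph{arbitrary} fixed $2$-cut. Take the paper's own extremal family with $k=2$ (Figure~\ref{F:S62}): $v_1$ is joined to all of $v_2,\dots,v_n$ and $v_2,\dots,v_n$ induces a path, so $\{v_1,v_4\}$ is a $2$-cut (with $v_1\sim v_4$, consistent with Lemma~\ref{L:trees}) whose components $\{v_2,v_3\}$ and $\{v_5,\dots,v_n\}$ are both non-singletons once $n\geq 6$. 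The singleton-pivot cut must therefore be \emph{constructed}, not read off whichever cut you fixed. The paper does this by defining, for each $x$, the sets $U_x,V_x$ of vertices across the cut whose geodesics to $x$ pass through $u$, respectively $v$; if some $V_x=\varnothing$, the vertex of $U_x$ farthest from $u$ has only two neighbors, yielding a \emph{new} $2$-cut isolating it, and the case where all $U_x,V_x$ are nonempty is killed by summing four strict hub inequalities to reach $0<0$. Your proposal contains no analogue of this existence argument.

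The deeper problem is that your declared engine --- the exclusion of the claw, the paw, and $C_4$ via Lemma~\ref{L:4verts} --- is provably insufficient for the linchpin of the proof, namely $U_x\cap V_x=\varnothing$ (no cross pair has geodesics through \emph{both} $u$ and $v$). If $a\in U_x\cap V_x$ then $d(x,a)=d(x,u)+d(u,a)=d(x,v)+d(v,a)$, so the submetric critical graph on $\{x,u,v,a\}$ is (at worst) $K_4-e$ --- one of the \emph{allowed} four-point graphs --- and no amount of four-point graph bookkeeping, however careful about recomputing submetric critical graphs, can detect a contradiction there. The paper instead runs a direct Schoenberg computation on this doubly degenerate quadruple: normalizing $d(x,a)=1$, setting $\epsilon=d(x,u)$, $\delta=d(x,v)$, $\gamma=d(u,v)$, and testing $\alpha=[1,-\zeta,-1,\zeta]$, the resulting quadratic in $\zeta$ has strictly positive discriminant because $\gamma>\abs{\delta-\epsilon}$, giving $\alpha^T D\alpha>0$ for some $\zeta$. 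This disjointness is what forces each cross pair to have a unique hub, which in turn delivers the total order on each side, the complete bipartite join, and the globally consistent Hamiltonian order --- precisely the ``global consistency'' issue you correctly flag but for which your plan supplies no mechanism. Without a quantitative step of this kind, going beyond the four-point classification, your approach cannot close.
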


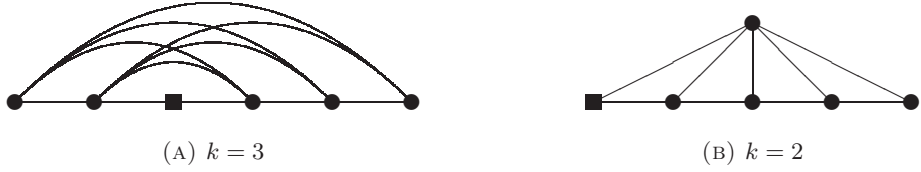
\begin{figure}
\centering
\hfill
\subfloat[$k=3$\label{F:S63}]{
\begin{picture}(150,95)
\put(0,10){\circle*{6}}
\put(30,10){\circle*{6}}
\put(90,10){\circle*{6}}
\put(120,10){\circle*{6}}
\put(150,10){\circle*{6}}
\put(57,7){\rule{6pt}{6pt}}
\qbezier(0,10)(45,55)(90,10) 
\qbezier(0,10)(60,70)(120,10) 
\qbezier(0,10)(75,85)(150,10)
\qbezier(30,10)(60,40)(90,10) 
\qbezier(30,10)(75,55)(120,10) 
\qbezier(30,10)(90,70)(150,10)
\put(0,10){\line(1,0){150}}
\end{picture}
}
\hfill
\subfloat[$k=2$\label{F:S62}]{
\begin{picture}(120,95)
\put(-3,7){\rule{6pt}{6pt}}
\put(30,10){\circle*{6}}
\put(60,10){\circle*{6}}
\put(90,10){\circle*{6}}
\put(120,10){\circle*{6}}
\put(60,40){\circle*{6}}
\put(0,10){\line(1,0){120}}
\put(60,40){\line(-1,-1){30}}
\put(60,40){\line(1,-1){30}}
\put(60,40){\line(-2,-1){60}}
\put(60,40){\line(2,-1){60}}
\put(60,40){\line(0,-1){30}}
\end{picture}
}
\hfill
\hfill
      \caption{$6$ Vertex Critical Graphs with Isometric Embeddings by Theorem \ref{T:3conn}}\label{F:crit_graphs}
\end{figure}

\begin{proof}
Since by assumption $G$ is 2-connected and not 3-connected, there
exists some pair of elements $\set{u,v}$ such that $G -\set{u,v}$ is
disconnected.  Furthermore, from Lemma \ref{L:trees} we know that $u
\sim v$.  We first show that $G - \set{u,v}$ has precisely two
components.  To that end, suppose not and let $x, y, z$ be in distinct
components.  Now consider the shortest paths between these elements,
pairwise.  Since $\set{u,v}$ is a cut and the vertices are in separate
components, each such shortest path must pass through either $u$ or
$v$.  By the pigeonhole principle, one of $u$ or $v$ has at least two
shortest paths passing through it.  Without loss of generality suppose
this is $u$ and the two shortest paths passing through it are beween
$x$ and $y$ as well as between $x$ and $z$.  Now consider the
submetric induced by $\set{x,y,z,u}$.  Since $x,y,z$ are all in
different components, vertex $u$ has degree three in the critical graph
associated to this metric.  Furthermore, since the shortest path
between $x$ and either $y$ or $z$ passes through $u$, the edges
$\set{x,y}$ and ${x,z}$ are not present in the critical graph.  Thus, the associated
critical graph is either a claw or a claw plus an edge, neither of
which are isometrically embeddable by Lemma \ref{L:4verts}.  Hence, $G
- \set{u,v}$ has precisely two components.  

Now for any $x \in X$ other than $u$ or $v$, define $V_x$ as the set
of elements $w$  in $X$ such that a shortest path between $x$ and $w$
passes through $v$ and that $w$ and $x$ are in different components of
$G - \set{u,v}$.  Define $U_x$ analogously.  We note that if $a,b \in
V_x$, then the critical graph associated to the submetric induced by
$\set{v,x,a,b}$ has no edge from $x$ to either $a$ or $b$.  Thus,
since the entire metric is isometrically embeddable, the associated
critical graph must be a path.  Furthermore, this implies that the
elements of $V_x$ are totally ordered by distance to $x$, $G[V_x]$ is
a path, and for any two vertices in $V_x$ there exists a shortest
path between them that  is entirely contained in
$G[V_x]$.  

Now suppose there exists some $a \in V_x \cap U_x$ and consider the
induced metric formed by $\set{x, u, v, a}$.  Note that we may rescale
the distances in the submetric so that $d(x,a) = 1$ without affecting
the isometric embeddablility of the submetric.  In this case, letting
$d(x,u) = \epsilon$, $d(x,v) = \delta$, and $d(u,v) = \gamma$ we have
that 
\[ D = \left[ \begin{matrix} 0 & 1 & \delta^2 & \epsilon^2 \\ 1 & 0 &
      (1-\delta)^2 & (1-\epsilon)^2 \\ \delta^2 & (1-\delta)^2 & 0 &
      \gamma^2 \\ \epsilon^2 & (1-\epsilon)^2 & \gamma^2 & 0
      \\ \end{matrix} \right] \]
up to rearrangment of the columns.  If we consider $\alpha$ of the
form $[1, -\zeta, -1, \zeta]$
then we have that 
\begin{align*}
\alpha^T D \alpha &=  -2(1+\gamma^2)\zeta  + 2\paren{\zeta\epsilon^2 +
                    \zeta(1-\delta)^2 -\delta^2 -
                    \zeta^2(1-\epsilon)^2} \\ 
&= -2 \paren{ \delta^2 - \paren{1 + \gamma^2 - \epsilon^2 -
  (1-\delta)^2}\zeta + (1-\epsilon)^2\zeta^2}.
\end{align*}
Viewing $\alpha^TD\alpha$ as a function of $\zeta$ we have that the
discriminant is 
\[
\paren{1 + \gamma^2 - \epsilon^2 - (1-\delta)^2}^2 
       -4\delta^2(1-\epsilon)^2,
\]
which is strictly positive so long as $1 + \gamma^2 - \epsilon^2 -
(1-\delta)^2 - 2\delta(1-\epsilon) > 0$.
Observing that by the triangle inequality, $\gamma^2 > \paren{\delta -
  \epsilon}^2$, we have that 
\begin{align*}
1 + \gamma^2 - \epsilon^2 - (1-\delta)^2 - 2 \delta (1-\epsilon) &>
1 + \paren{\delta -\epsilon}^2 - \epsilon^2 - (1-\delta)^2 - 2\delta(1-\epsilon) \\
&= 1+ \delta^2 -2\delta\epsilon + \epsilon^2  - \epsilon^2 - 1
  +2\delta - \delta^2 - 2\delta + 2\delta\epsilon \\
&= 0.
\end{align*}
Thus the discriminate is strictly positive and so there exists some
choice of $\zeta$ such that $\alpha^T D \alpha > 0$, contradicting the
embedability of $(X,d)$.  As a consequence we have that $V_x \cap U_x =
\varnothing$ for all $x \in X - \set{u,v}$. In particular, if $a$ and
$b$ are in different components of $G - \set{u,v}$ then there is
precisely one of $\set{u,v}$ that lies on all of the shortest paths
between $a$ and $b$.

Suppose there exists some pair $a \in U_x \cup \set{u}$ and $b \in V_x
\cup \set{v}$ such that $a \not\sim b$.  Let $w$ be a vertex on a
shortest path between $a$ and $b$ and consider the submetric
determined by $\set{a,b,w,x}$.  Without loss of generality assume that
$w$ lies in $U_x \cup \set{u}$.  Since the distances to $u$ in $U_x
\cup \set{u}$ can be totally ordered, this implies that either $w$ is
closer to $u$ than $a$ or $w$ is farther from $u$ than $a$.  In the
former case, we have that the critical graph is a claw plus an edge,
and is not isometrically embeddable in a Hilbert space.  In the later
case, recalling that $U_x \cap V_x = \varnothing$ we have that the
critical graph is $C_4$ and hence is not isometrically embeddable in a
Hilbert space.  Thus, if $x$ is a vertex adjacent to both $u$ and $v$,
there is a path $V_x, v, x, u, U_x$ and every element in $V_x \cup
\set{v}$ is adjacent to every element in $U_x \cup \set{u}$.  As a
consequence, to complete the proof it suffices to show that there exists some 2-cut in $G$
such that there is a component containing precisely one vertex.  

To that end, suppose that $V_x = \varnothing$ and let $u = u_0, u_1, u_2,
\ldots, u_t$ be the ordering of elements of $U_x \cup \set{u}$ in terms of
increasing distance to $u$.  Now note that the only neighbors of $u_t$
are $u_{t-1}$ and $v$ and hence $\set{u_{t-1},v}$ is a 2-cut with a
component having precisely one element.  By applying an analogous
argument in the case $U_x = \varnothing$, we may assume without loss
of generality that for any vertex $x \in X -\set{u,v}$, both $U_x$ and $V_x$ are
non-empty.  

Now choose some $a \in U_x$ and $b \in V_x$ and consider $V_a$.  Since
$U_a$ and $V_a$ are disjoint, $x \not\in V_a$, and thus there exists
some $z \in V_a$ such that $z \neq x$.  Now suppose that the shortest
paths between $b$ and $z$ goes through $v$, and consider the submetric
induced by $\set{v,x,b,z}$. As all shortest paths between $x$ and $b$,
as well as $z$ and $b$ go through $v$, the associated critical graph
has $v$ as vertex of degree $3$ and no edges between $\set{x,z}$ and
$b$.  Thus the associated critical graph is either a claw or a claw
plus an edge, and hence is not isometrically embeddable in a Hilbert
space.  Hence, we have that the shortest paths between $b$ and $z$
pass through $u$.  Now, by the fact that all shortest paths between
vertices in opposite components pass through precisely one of $u$ and
$v$, we have that 
\begin{align*}
d(x,u) + d(u,a) &< d(x,v) + d(v,a) \\
d(x,v) + d(v,b) &< d(x,u) + d(u,b) \\
d(z,u) + d(u,b) &< d(z,v) + d(v,b) \\
d(z,v) + d(v,a) &< d(z,u) + d(u,a).
\end{align*}
Summing these inequalities and cancelling we get $0 < 0$, a
contradiction.  Thus there exists some vertex $x$ such that one of
$U_x$ and $V_x$ is empty, completing the proof.  
\end{proof}

It is easy to find weighting for the classe graphs described in Theorem
\ref{T:3conn} so that it is the critical graph of a metric space
which is isometrically embeddable in a Hilbert space.  Specifically, 
fix some pair of positive integers, $k$ and $n$ such that $2 \leq k
\leq n$ and let $S_{n,k} = \set{(0,0)} \cup \set{(i,0) \colon 1 \leq i < k} \cup
\set{(0,j) \colon 1 \leq n-k} \subseteq \N^2$.  Consider the critical
graph induced by $(S,d)$ where $d$ is the standard distance metric on
$\N^2$.  The critical graph $G$ for this metric consists of a path $(k-1,0), (k-2, 0),
\ldots, (0,0), (0,1), (0,2), \ldots, (0,n-k)$ as well as edges between
any pair of points $(x,0)$ and $(0,y)$ where $x,y \neq 0$.  That is,
$G$ has a Hamiltonian path with an identified non-endpoint vertex,
$(0,0)$, and every pair of vertices on opposite sides of $(0,0)$ are
connected by an edge.  Observing that $\N^2$ sits isometrically in
the standard Hilbert space on $\R^2$ completes the construction.

\begin{figure}
\centering
\hfill
\subfloat[Embedding Yielding Figure \ref{F:S63}]{
\begin{picture}(150,90)
\multiput(15,15)(5,0){25}{\circle*{1}}
\multiput(15,45)(5,0){25}{\circle*{1}} 
\multiput(15,75)(5,0){25}{\circle*{1}} 
\multiput(30,0)(0,5){19}{\circle*{1}}
\multiput(60,0)(0,5){19}{\circle*{1}}
\multiput(90,0)(0,5){19}{\circle*{1}}
\multiput(120,0)(0,5){19}{\circle*{1}}
\put(27,12){\rule{6pt}{6pt}}
\put(30,45){\circle*{6}}
\put(30,75){\circle*{6}}
\put(60,15){\circle*{6}}
\put(90,15){\circle*{6}}
\put(120,15){\circle*{6}}
\end{picture}
}
\hfill
\subfloat[Embedding Yielding Figure \ref{F:S62}]{
\begin{picture}(150,90) 
\multiput(0,15)(5,0){31}{\circle*{1}}
\multiput(0,45)(5,0){31}{\circle*{1}} 
\multiput(15,0)(0,5){13}{\circle*{1}}
\multiput(45,0)(0,5){13}{\circle*{1}}
\multiput(75,0)(0,5){13}{\circle*{1}}
\multiput(105,0)(0,5){13}{\circle*{1}}
\multiput(135,0)(0,5){13}{\circle*{1}}
\put(12,12){\rule{6pt}{6pt}}
\put(15,45){\circle*{6}}
\put(45,15){\circle*{6}}
\put(75,15){\circle*{6}}
\put(105,15){\circle*{6}}
\put(135,15){\circle*{6}}
\end{picture}
}
\hfill
\hfill
\caption{Embeddings in $\R^2$ of Metrics with Critical Graphs 
        Given in Figure \ref{F:crit_graphs}}\label{F:embedding}
\end{figure}
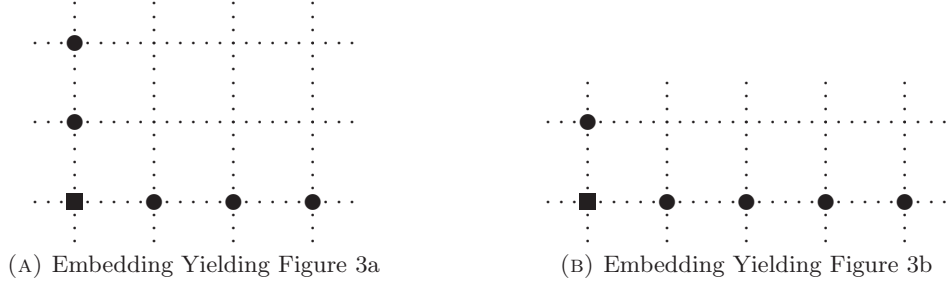
\section{Geometric Spectrum of the Normalized
  Laplacian} \label{S:GeomSpec}
In this section, we will show how the ability to embed a metric space
$(X,d)$ isometrically allows the concept of a geometric Fiedler vector
to be extended to a geometric spectra  for the graph with
respect to the metric space $(X,d)$.  For the sake of specificity, we
will use the geometric Fiedler vector for the normalized Laplacian as
defined by Radcliffe and Williamson~\cite{Radcliffe:GeometricEigs} and Dumitriu and
Radcliffe~\cite{Radcliffe:GeometricExpansion}.  To that end, recall that given a connected graph $G = (V,E)$, the \emph{normalized Laplacian} is
defined as 
\[
\mathcal{L} = I-T^{-1/2}AT^{-1/2}
\]
where $A$ and $T$ are the adjacency and diagonal degree matrix of $G$, respectively. Specifically,
\[
\mathcal{L}_{xy} =
\begin{cases}
1 & \text{if } x = y \\
\frac{-1}{\sqrt{d_xd_y}} & \text{if }\{x,y\} \in E \\
0 & \text{otherwise}
\end{cases},
\]
where $d_x$ is the degree of vertex $x$ in the graph $G$.  Now the $0$
eigenvalue for $\mathcal{L}$ is given by $T^{\half}\one$ and thus by
the Courant-Fisher Theorem,
\[ \lambda_2 = \min_{x \perp T^{\half} \one}
  \frac{\inner{x}{\mathcal{L}x}}{\inner{x}{x}}. \]

Following the standard transformation in~\cite{Chung:spectral}, we
have that
\[ 
  \frac{\inner{x}{\mathcal{L}x}}{\inner{x}{x}} =
  \frac{\inner{x}{T^{-\half}LT^{-\half}x}}{\inner{x}{x}} =
  \frac{\inner{T^{-\half}x}{LT^{-\half}{x}}}{\inner{x}{x}} =
  \frac{\inner{f}{Lf}}{\inner{T^{\half}f}{T^{\half}f}} =
  \frac{\sum_{u \sim
      v}(f(u)-f(v))^2}{\sum_{v}f(v)^2d_v}, \]
where $f = T^{-\half}x$.  We will follow the terminology of 
\cite{Chung:spectral} and refer to $f$ as a \emph{harmonic
  eigenvector} for $\mathcal{L}$.  It is worth noting that the harmonic
eigenvectors generate the same spectrum as the standard
eigenvectors.  In particular,
\[ \lambda_2 =\min_{f \perp T\one} \frac{\sum_{u \sim
      v}(f(u)-f(v))^2}{\sum_{v}f(v)^2d_v}.\]
Furthermore, following the derivation in \cite{Chung:spectral} the
restriction on $f$ can be relaxed;
\begin{align*}
\lambda_2 = \min_{f \perp T\one} \frac{\sum_{u \sim
      v}(f(u)-f(v))^2}{\sum_{v}f(v)^2d_v} &= \min_f \frac{\sum_{u \sim
      v}(f(u)-f(v))^2}{\sum_{v}\paren{f(v) - \bar{f}}^2d_v} \\ &= \min
  _{f \textrm{ non-constant}} \frac{\vol(G) \sum_{u \sim
      v}(f(u)-f(v))^2}{\sum_{u,v}\paren{f(u)-f(v)}^2d_ud_v},
\end{align*}
where $\bar{f} = \frac{\sum_v f(v) d_v}{\vol(G)}$ and $\vol(G) = \sum_v
d_v$. From this formulation, the geometric Fiedler vector can be
defined by noticing that $\paren{f(u)-f(v)}^2$ can be viewed as the
squared distance between $f(u)$ and $f(v)$ on $\R$.  Thus the
geometric Fiedler value of the normalized Laplacian over the metric
$(X,d)$ is defined as
\[ \lambda_2(G,X) = \min_{\substack{f \colon V \rightarrow X \\ f \text{ non-constant}}} \frac{\vol(G)\sum_{u \sim v}d(f(u),f(v))^2}{\sum_{u,v}d(f(u),f(v))^2d_ud_v}.\]

Now suppose that $G$ can be isometrically embedded via $\phi$ into a
Hilbert space $\mathcal{H}$ with kernel function $K$.  Denote by
$\norm[\mathcal{H}]{\cdot}$ and $\inner[\mathcal{H}]{\cdot}{\cdot}$
the norm and inner product on $\mathcal{H}$.  For the sake of
simplicity we will assume that $\mathcal{H} = \SPAN\set{\phi(X)}$.  We
then have
that 
\[ \lambda_2(G,X) = \min_{\substack{f \in \phi(X)^{\size{V}} \\ f
      \text{ non-constant}}} \frac{\vol(G)\sum_{u \sim
      v}\norm[\mathcal{H}]{f(u)-f(v)}^2}{\sum_{u,v}\norm[\mathcal{H}]{f(u)-f(v)}^2d_ud_v}.\]
At this point it would be natural to define orthogonality in terms of
$\mathcal{H}$, however we recall that $f \in \mathcal{H}$ is analogous to a harmonic
eigenfunction of $\mathcal{L}$ over $\R$.  Hence, it is necessary to invert the
process of defining the harmonic eigenfunction in order to determine
the appropriate notion of orthogonality.  To that end we first need
the following multi-dimensional generalization of a standard algebraic
fact.
\begin{lem}
Let $\alpha_1,\ldots,\alpha_n \in \R^k$ and let $\alpha = \frac{1}{n}
\sum_i \alpha_i$, then  \[ n \sum_i \norm{\alpha_i - \alpha}^2 = \sum_{i
  < j} \norm{\alpha_i - \alpha_j}^2.\]
\end{lem}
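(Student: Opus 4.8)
The plan is to prove this identity by reducing it to the familiar one-dimensional variance identity and then lifting to $\R^k$ by bilinearity of the inner product. Two essentially equivalent routes are available; I would present the direct expansion, since it requires nothing beyond $\norm{x}^2 = \inner{x}{x}$ and the defining relation $\sum_i \alpha_i = n\alpha$.

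First I would expand the left-hand side, writing
\[ n\sum_i \norm{\alpha_i - \alpha}^2 = n\sum_i \paren{\norm{\alpha_i}^2 - 2\inner{\alpha_i}{\alpha} + \norm{\alpha}^2}, \]
and then collapse the middle term using $\sum_i \alpha_i = n\alpha$, which gives $\sum_i \inner{\alpha_i}{\alpha} = \inner{n\alpha}{\alpha} = n\norm{\alpha}^2$. After combining the constant term this reduces the left-hand side to $n\sum_i \norm{\alpha_i}^2 - n^2\norm{\alpha}^2$.

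Next I would symmetrize the right-hand sum so the index set is easier to manipulate, using $\sum_{i<j} = \frac{1}{2}\sum_{i,j}$ (the diagonal terms $i=j$ contribute nothing since $\norm{\alpha_i-\alpha_i}^2=0$):
\[ \sum_{i<j}\norm{\alpha_i - \alpha_j}^2 = \frac{1}{2}\sum_{i,j}\paren{\norm{\alpha_i}^2 - 2\inner{\alpha_i}{\alpha_j} + \norm{\alpha_j}^2}. \]
The two outer terms each contribute $\frac{1}{2}\cdot n\sum_i \norm{\alpha_i}^2$, while the cross term gives $-\sum_{i,j}\inner{\alpha_i}{\alpha_j} = -\inner{\sum_i \alpha_i}{\sum_j \alpha_j} = -n^2\norm{\alpha}^2$, again invoking $\sum_i \alpha_i = n\alpha$. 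Thus the right-hand side also equals $n\sum_i \norm{\alpha_i}^2 - n^2\norm{\alpha}^2$, matching the left-hand side and completing the argument.

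There is essentially no obstacle here: the statement is the vector-valued form of the standard scalar identity $n\sum_i(a_i-\bar a)^2 = \sum_{i<j}(a_i-a_j)^2$, and the only care needed is the bookkeeping with the factor of $\frac{1}{2}$ when passing between $\sum_{i<j}$ and $\sum_{i,j}$. If one prefers to avoid even this, an alternative is to write $\norm{\cdot}^2$ as a sum of squares of coordinates, apply the known scalar identity to each of the $k$ coordinate sequences, and sum; since every term in the claimed identity is additive over coordinates, the vector identity follows at once. I expect either presentation to occupy only a few lines.
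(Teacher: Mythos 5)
Your proposal is correct and uses essentially the same argument as the paper: both proofs expand the squared norms via bilinearity of the inner product and invoke $\sum_i \alpha_i = n\alpha$ to collapse the cross terms. The only difference is organizational --- the paper transforms the left-hand side into the right-hand side in a single chain of equalities, while you reduce both sides to the common expression $n\sum_i \norm{\alpha_i}^2 - n^2\norm{\alpha}^2$ --- which is an immaterial distinction.
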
 
\hidden{
\begin{proof}
We note that
\begin{align*}
n \sum_i \norm{\alpha_i - \alpha}^2 &= n \sum_i \inner{\alpha_i - \alpha}{\alpha_i -
  \alpha} \\
&= n \sum_i \paren{ \inner{\alpha_i}{\alpha_i} - 2\inner{\alpha_i}{\alpha} +
  \inner{\alpha}{\alpha} }\\
&= n \sum_i  \paren{\norm{\alpha_i}^2 - \frac{2}{n}\sum_j \inner{\alpha_i}{\alpha_j} +
 \frac{1}{n^2}\sum_s \sum_t \inner{ \alpha_s}{\alpha_t} } \\
&= \sum_i  n \norm{\alpha_i}^2 - 2\sum_i\sum_j \inner{\alpha_i}{\alpha_j} +
\sum_s \sum_t \inner{ \alpha_s}{\alpha_t}  \\
&= \sum_i  n \norm{\alpha_i}^2 - \sum_i\sum_j
  \inner{\alpha_i}{\alpha_j}  \\
&= \sum_i  (n-1) \norm{\alpha_i}^2 - \sum_{i < j}
  2\inner{\alpha_i}{\alpha_j}  \\
&= \sum_{i < j} \norm{\alpha_i}^2 - 2\inner{\alpha_i}{\alpha_j} +
  \norm{\alpha_j}^2 \\
&= \sum_{i < j} \norm{\alpha_i - \alpha_j}^2,
\end{align*}
as desired.
\end{proof}
}
Thus we have that 
\[
\lambda_2(G,X) = \min_{\substack{f \in \phi(X)^{\size{V}} \\ f 
      \text{ non-constant}}} \frac{\sum_{u \sim v} \norm[\mathcal{H}]{f(u) -
      \bar{f} + \bar{f}
      -f(v)}^2}{\sum_{u}\norm[\mathcal{H}]{f(u)-\bar{f}}^2d_u},\]
where $\bar{f} = \frac{1}{\vol(G)} \sum_v f(v)d_v$. Letting $g(u)
= f(u) - \bar{f}$ and $P = \set{f - \one_{\size{V}} \otimes \bar{f} \colon f \in \phi(X)^{\size{V}}}$, this can be rewritten as 
\begin{align*}
\lambda_2(G,X) &= \min_{g \in P -\set{0}} \frac{\sum_{u\sim v} \norm[\mathcal{H}]{g(u) 
      -g(v)}^2}{\sum_{u}\norm[\mathcal{H}]{g(u)}d_u} \\
 &= \min_{g \in P - \set{0}} \frac{\sum_{u \sim v} \inner[\mathcal{H}]{g(u) -
      g(v)}{g(u) - g(v)}}{\sum_{u}\inner[\mathcal{H}]{g(u)}{g(u)}
  d_u} \\
&= \min_{g \in P - \set{0}} \frac{\sum_{u \sim v} \inner[\mathcal{H}]{g(u) -
      g(v)}{g(u) -
  g(v)}}{\sum_{u}\inner[\mathcal{H}]{\sqrt{d_u}g(u)}{\sqrt{d_u}g(u)}}
  \\
&=\min_{g \in P - \set{0}} \frac{\sum_{u \sim v} \inner[\mathcal{H}]{g(u) -
      g(v)}{g(u) -
  g(v)}
}{\inner[\mathcal{H}^{\size{V}}]{T^{\half}\otimes I_{\mathcal{H}}
  g}{T^{\half}\otimes I_{\mathcal{H}} g}} \\
&=\min_{g \in P - \set{0}} \frac{
\inner[\mathcal{H}^{\size{V}}]{L^{\half} \otimes I_{\mathcal{H}} g}{L^{\half} \otimes I_{\mathcal{H}} g}}{\inner[\mathcal{H}^{\size{V}}]{T^{\half}\otimes I_{\mathcal{H}}
  g}{T^{\half}\otimes I_{\mathcal{H}} g}} 
\end{align*}
where the last equality follows from standard transformations of the
combinatorial Laplacian combined with the fact that $L$ is positive
semi-definite and has a matrix square root.  
Now, since $(X,d)$ is a finite metric space and $\mathcal{H} =
\SPAN\set{\phi(X)}$, we may think of $g$ as an element of $\R^{k
  \size{V}}$ for some finite $k$.   Thus, we have
\[ \lambda_2(G,X) = \min_{\substack{g \in P\\ g \perp \SPAN\set{T \otimes e_i}}} \frac{
    \paren{L^{\half} \otimes I_k g}^T \paren{I_{\size{V}} \otimes
      K} \paren{L^{\half} \otimes I_k g}}{\paren{T^{\half}\otimes I_k
      g}^T \paren{I_{\size{V}} \otimes K} \paren{T^{\half}\otimes I_k
      g}}. \] Since $K$ is a kernel function, both $K$ and $L$ are
positive semi-definite and have unique matrix square roots.  And so,
letting $h = T^{\half} \otimes K^{\half} g$, we can see that the
minimization can be re-expressed as a Raleigh quotient in terms of $h$
for $\mathcal{L} \otimes I_k$. As a consequence we see that two
non-constant functions $f_1 \colon V \rightarrow X$ and $f_2 \colon V
\colon \rightarrow X$ should be considered orthogonal if and only if 
\begin{align*} 
0 &= \inner{(T^{\half} \otimes K^{\half}) \paren{f_1 -
    \overline{f_1}}}{(T^{\half} \otimes K^{\half}) \paren{f_2 -
    \overline{f_2}}} \\
&= \inner[\mathcal{H}^{\size{V}}]{(T^{\half} \otimes I_{\mathcal{H}} ) \paren{f_1 -
    \overline{f_1}}}{(T^{\half} \otimes I_{\mathcal{H}}) \paren{f_2 -
    \overline{f_2}}}   \\
&= \sum_v d_v \inner[\mathcal{H}]{f_1(v) - \overline{f_1}(v)}{f_2(v) - \overline{f_2}(v)}\\
&= \sum_v d_v \inner[\mathcal{H}]{f_1(v)}{f_2(v)}  -
  d_v\inner[\mathcal{H}]{\overline{f_1}(v)}{f_2(v)} -
  d_v\inner[\mathcal{H}]{f_1(v)}{\overline{f_2}(v)} + d_v\inner[\mathcal{H}]{\overline{f_1(v)}}{\overline{f_2}(v)}\\
&= \sum_v d_v \inner[\mathcal{H}]{f_1(v)}{f_2(v)}  - 2\sum_{v,u}\frac{d_vd_u}{\vol(G)}\inner[\mathcal{H}]{f_1(u)}{f_2(v)} +
  \sum_{v,x,y}
  \frac{d_vd_xd_y}{\vol(G)^2}\inner[\mathcal{H}]{f_1(x)}{f_2(y)}\\
&= \sum_v d_v \inner[\mathcal{H}]{f_1(v)}{f_2(v)}  - 
  \sum_{x,y}
  \frac{d_xd_y}{\vol(G)}\inner[\mathcal{H}]{f_1(x)}{f_2(y)}\\
\end{align*}

Before continuing, we should observe that if we relax the discrete
nature of the embedding, i.e. remove the restriction that $g \in P$,
we end up with the spectrum of the operator $\mathcal{L} \otimes
I_{\mathcal{H}}$ and an eigendecomposition that can be readily
recovered from the standard decomposition of $\mathcal{L}$.  Thus we
insist on the discrete nature of the embedding and note that that if $\mathcal{H}$, $K$, and $\phi$ come from
the Schoenberg construction for point $x_0$, then the orthogonality condition can be simplified
further by observing that for any two points
in $\phi(X)$ the inner product is explicitly given by the kernel.
Specifically, let $\tilde{f}_1 \colon V \rightarrow X$ and
$\tilde{f_2} \colon V \rightarrow X$ be two non-constant functions
from $V$ to the metric space.  They should be considered orthogonal if
and only if 
\begin{align*}
0 &= \sum_v \frac{d_v}{2} \paren{ d(\tilde{f}_1(v), x_0)^2 +
    d(\tilde{f}_2(v), x_0)^2 - d(\tilde{f}_1(v),\tilde{f}_2(v))^2} \\
  &\qquad -
    \sum_{u,v} \frac{d_ud_v}{2\vol(G)} \paren{ d(\tilde{f}_1(v), x_0)^2 +
    d(\tilde{f}_2(u), x_0)^2 - d(\tilde{f}_1(v),\tilde{f}_2(u))}^2 \\
&= 
  \sum_{u,v} \frac{d_ud_v}{2\vol(G)} d(\tilde{f}_1(v),\tilde{f}_2(u))^2  - \sum_v \frac{d_v}{2} d(\tilde{f}_1(v),\tilde{f}_2(v))^2.
\end{align*}
It is worth noting that this definition is independent of the choice
of base point, $x_0$, and furthermore does not depend on the
existence of a particular embedding of $(X,d)$ into $\mathcal{H}$.
Thus, it also provides a plausible definition for orthogonality of
mappings when $(X,d)$ is not isometrically embeddable in a Hilbert
space.  

Furthermore, if we consider let $(X,d)$ be the standard metric space
on $\R$ and recall that
the harmonic eigenvectors for $\mathcal{L}$ over $\R$ satisfy $f \perp T\one$, we have that 
the orthogonality condition is:
\begin{align*}
0 &= \sum_{u,v} \frac{d_ud_v}{2\vol(G)} \paren{\tilde{f}_1(v) -
    \tilde{f}_2(u)}^2  - \sum_v \frac{d_v}{2} \paren{\tilde{f}_1(v) -
    \tilde{f}_2(v)}^2 \\
&= \sum_{u,v} \frac{d_ud_v}{2\vol(G)} \paren{\tilde{f}_1(v)^2 -
  2\tilde{f}_1(v)\tilde{f}_2(u) +
    \tilde{f}_2(u)^2}  - \sum_v \frac{d_v}{2} \paren{\tilde{f}_1(v)^2 
  - 2\tilde{f}_1(v)\tilde{f}_2(v) + 
    \tilde{f}_2(v)^2} \\
&= \sum_v d_v 
  \tilde{f}_1(v)\tilde{f}_2(v) -\sum_{u,v} \frac{d_ud_v}{\vol(G)} 
  \tilde{f}_1(v)\tilde{f}_2(u)   \\
&= \sum_v d_v 
  \tilde{f}_1(v)\tilde{f}_2(v),
\end{align*}
which is precisely the condition for orthogonality for harmonic eigenvectors.

\bibliographystyle{siam}
\bibliography{references}

\end{document}